\newcommand{\EE}{\mathbb{E}}
\newcommand{\NN}{\mathbb{N}}
\newcommand{\PP}{\mathbb{P}}
\newcommand{\RR}{\mathbb{R}}
\newcommand{\D}{\mathrm{d}}
\newcommand{\ds}{\mathrm{d}s}
\newcommand{\dr}{\mathrm{d}r}
\newcommand{\dt}{\mathrm{d}t}
\newcommand{\du}{\mathrm{d}u}
\newcommand{\E}{\mathrm{e}}
\newcommand{\Cc}{\mathcal{C}}
\newcommand{\Dd}{\mathcal{D}}
\newcommand{\Ee}{\mathcal{E}}
\newcommand{\Ff}{\mathcal{F}}
\newcommand{\Oo}{\mathcal{O}}
\newcommand{\Ww}{\mathcal{W}}
\newcommand{\ep}{\varepsilon}
\newcommand{\frf}{\mathfrak{f}}
\newcommand{\frF}{\mathfrak{F}}
\newcommand{\VIX}{{\rm VIX}}
\newcommand{\Wh}{\widehat{W}}
\newcommand{\Th}{\widehat{\Theta}}
\newcommand{\kphi}{\kappa_{\phi}}
\newcommand{\bxb}{\bar{\bx}}
\newcommand{\etab}{\bar{\eta}}
\newcommand{\tb}{\bar{t}}
\newcommand{\Tb}{\breve{T}}
\newcommand{\wS}{\widehat{\Sigma}}
\newcommand{\bt}{\mathbf{t}}
\newcommand{\bw}{\mathbf{w}}
\newcommand{\bx}{{\bm{x}}}
\newcommand{\half}{\frac{1}{2}}
\newcommand{\one}{\mathbbm{1}}
\newcommand{\abs}[1]{\left\lvert#1\right\rvert}
\newcommand{\norm}[1]{\left\lVert#1\right\rVert}
\newcommand{\norminfty}[1]{\left\lVert#1\right\rVert_{\infty}}
\newcommand{\BS}{{\rm BS}}
\newcommand{\notthis}[1]{}
\newtheorem{theorem}{Theorem}[section]
\newtheorem{corollary}[theorem]{Corollary}
\newtheorem{lemma}[theorem]{Lemma}
\newtheorem{proposition}[theorem]{Proposition}
\theoremstyle{definition}
\newtheorem{definition}[theorem]{Definition}
\newtheorem{remark}[theorem]{Remark}
\newtheorem{assumption}[theorem]{Assumption}
\newtheorem{example}[theorem]{Example}
\numberwithin{equation}{section}
\definecolor{ocean}{rgb}{0,0.1,0.6}
\definecolor{imperialGreen}{RGB}{2,137,59}
\definecolor{imperialBlue}{RGB}{0, 62, 116}
\definecolor{imperialBrick}{RGB}{165,25,0}
\definecolor{imperialProcess}{RGB}{0,133,202}
\author{Alexandre Pannier}
\address{Université Paris Cité, Laboratoire de Probabilités Statistique et Modélisation (LPSM)}
\email{pannier@lpsm.paris}
\title[\sc{Path-dependent PDEs for volatility derivatives}]{\Large{P\lowercase{ath-dependent} PDE\lowercase{s for volatility derivatives}}}
\date{\today}
\subjclass[2020]{60G22, 35K10, 91G20}
\keywords{VIX options, path-dependent PDE, implied volatility, rough volatility}
\thanks{The author would like to thank Rigpa Thapa for her unwavering support and source of inspiration.}
\begin{document}

\begin{abstract}
    We regard options on VIX and Realised Variance as solutions to path-dependent partial differential equations (PDEs) in a continuous stochastic volatility model. 
    The modeling assumption specifies that the instantaneous variance is a $C^3$ function of a multidimensional Gaussian Volterra process; this includes a large class of models suggested for the purpose of VIX option pricing, either rough, or not, or mixed.
    We unveil the path-dependence of those volatility derivatives and, under a regularity hypothesis on the payoff function, we prove the well-posedness of the associated PDE. The latter is of heat type, because of the Gaussian assumption, and the terminal condition is also path-dependent. Furthermore, formulae for the greeks are provided, the implied volatility is shown to satisfy a quasi-linear path-dependent PDE and, in Markovian models, finite-dimensional pricing PDEs are obtained for VIX options.
\end{abstract}

\maketitle

\section{Introduction}

In a continuous time model, VIX and Realised Variance (RV) both boil down to time-averages of the stochastic volatility of the asset. Prices of derivatives on these underlyings are represented as expectations, hence their numerical evaluation naturally leans towards Monte Carlo methods. The extensive literature dedicated to simulation schemes---which covers their design, numerical implementation, and convergence analysis---bears witness to the omnipresence of this approach, especially for volatility derivatives. 
As an alternative, this paper proposes to view option prices on volatility derivatives such as the VIX as solutions to a path-dependent PDE (PPDE).
Before getting into more details, we should first explore the motivation behind this class of financial assets.
Readers who wish to get down to business may directly jump to Section~\ref{subsec:PPDEs}.

\subsection{Background}
Volatility derivatives are used both as risk management and speculation tools to get a direct exposure to an index or a stock's volatility. Although this class has attracted attention as a whole, VIX derivatives became some of the most liquid instruments on the financial markets. The CBOE Volatility Index (VIX) measures the 30-day forward-looking volatility of the S\&P500 index (SPX); more precisely, it represents a log-contract on the SPX, approximately replicated with a weighted sum of quoted Calls and Puts. In an idealised stochastic volatility model $\D S_t/S_t = \sigma_t \D B_t$, we have by Itô's formula
\begin{equation}\label{eq:defVIX}
\VIX_T^2 = \EE\left[-\frac{2}{\Delta} \log\left(\frac{S_{T+\Delta}}{S_T}\right) \Big\lvert \Ff_T \right]
= \frac{1}{\Delta}\int_T^{T+\Delta} \EE\big[\sigma^2_t | \Ff_T \big] \dt,
\end{equation}
where $\Delta$ is the 30-day window and~$\EE[\sigma^2_t|\Ff_T]$ represents the forward variance curve. 
The structural links between VIX and SPX, visible from the computation above, require a consistent model for $\sigma$ able to jointly calibrate options on both underlyings simultaneously. This central issue is a driving force of research on volatility and has proven particularly challenging to resolve. Since the introduction of VIX options in 2006, massive efforts were produced by the community to design models and numerical methods up to the task. 

Bergomi \cite{bergomi2008smile3} and Gatheral \cite{gatheral2008consistent} rapidly argued for multifactor models (Ornstein-Uhlenbeck and CEV respectively), before jump models tackled the problem. We will gently leap over this stream of research as we are concerned here with continuous and particularly rough volatility models. Let us mention that robust statistical estimators and significant empirical evidence~\cite{bolko2022gmm,chong2024statisticala,chong2024statisticalb,fukasawa2022consistent,han2023estimating,wu2022rough} confirmed the original thesis that log-volatility trajectories indeed have low regularity \cite{gatheral2018volatility}. Evidence from options data can also be found in~\cite{BFG16,delemotte2023yet,guyon2022does,romer2022empirical}. More relevant to this paper's premise, a variety of simulation schemes were developped around and away from the traditional Euler scheme, including the hybrid scheme~\cite{bennedsen2017hybrid,mccrickerd2018turbocharging}, a tree formulation \cite{horvath2024functional}, Markovian multifactor approximations \cite{abi2019multifactor,alfonsi2023approximation,bayer2023markovian} and quantization~\cite{bonesini2023functional}. When it comes to pricing VIX options, these models also went through the Monte Carlo pipe~\cite{bourgey2021multilevel,gatheral2020quadratic,horvath2020volatility,jacquier2018vix,rosenbaum2021deep}. Indeed, rough volatility has not been generous on alternatives: a few asymptotic results~\cite{alos2018smile,forde2021small,jacquier2021rough,lacombe2021asymptotics} and a weak expansion~\cite{bourgey2023weak} are the only contenders to the best of our knowledge.

Widening our scope a little, we observe that signature-based models led to a semi-closed formula for VIX~\cite{cuchiero2025joint}. When the variance is a semimartingale with linear drift, one can write $\VIX_T$ as a function of $\sigma_T$ only, which leads to a standard type of pricing PDE. The authors of~\cite{barletta2019short,fouque2018heston,lin2009vix,papanicolaou2014regime} exploited this property to derive fast pricing techniques and small-parameter expansions in Heston type of models. Similarly, under the assumption that the variance curve is of the form~$G(Z_t,T-t)$ with~$Z$ a semimartingale, Buehler~\cite{buehler2006consistent} proved an HJM condition for the variance curve which takes the form of a PDE. This is again the case in the Heston model. Unfortunately, the non-Markovianity of the (quadratic) rough Heston model prevents both these ideas to be applied there.
Recently, several multifactor models, dubbed quintic~\cite{abi2025joint}, S-M-2F-QHyp~\cite{romer2022empirical} and 4-factor path-dependent volatility~\cite{guyon2023volatility}, claimed that simple transformations of Markovian dynamics were sufficient to capture the joint calibration. The first two, in particular, model the volatility as transformations of Ornstein-Uhlenbeck (OU) processes, similarly to what Bergomi originally proposed in~\cite{bergomi2008smile2,bergomi2008smile3}. This further motivates us to model the squared volatility as a generic function of a multidimensional Gaussian Volterra process $\Wh$, such as a fractional Brownian motion or an OU process. Our framework thus encompasses many of the models presented above, in particular the family of Bergomi models which can be of multifactor, rough or mixed type or any combination of the aforementioned (see Examples~\ref{ex:kernels} and~\ref{ex:volfct}). Suitable and more stringent conditions should allow to generalise our results to a broader class of processes including solutions to stochastic Volterra equations. However, none of the above-mentioned models would benefit from this extension (not even the quadratic rough Heston model) hence we refrain from doing so.  

\subsection{Path-dependent PDEs}\label{subsec:PPDEs}
Options with path-dependent payoffs are expected since the seminal work of Dupire~\cite{dupire2019functional} to satisfy a certain kind of PPDE. However, to the best of our knowledge, they were never formulated in the context of variance options. Cont and Fournié~\cite{cont2010change,cont2013functional} provided a rigorous framework for the functional Itô formula of Dupire and extended it to Dirichlet processes, which include in particular the integral~\eqref{eq:defVIX} as a function of $T$. With a Markovian variance process, variance options may still be represented as functions of a Dirichlet process. This representation unfortunately fails in general in the non-Markovian case this paper focuses on: it is the central observation of Viens and Zhang~\cite[Section 2]{viens2019martingale}. Moreover, the PPDEs resulting from the Dupire-Cont-Fournié calculus were studied primarly via the lens of viscosity solutions~\cite[Chapter 8]{bally2016stochastic}, \cite{ren2014overview}. The only instance of a classical solution arose in the specific case where the underlying is a Brownian motion~\cite{peng2016bsde}.

The case of rough (non-Markovian) volatility, more recent, was initiated by the functional Itô formula derived in~\cite{viens2019martingale}, and the PDE aspect was further developped in~\cite{bonesini2023rough,wang2022path}.
The insight of the former authors is that, contrary to the Markovian case, if $(\Wh_t:=\int_0^t K(t,s)\D W_s)_{t\ge0}$ is an~$\RR^d$-valued Gaussian Volterra process then the conditional expectation~$\displaystyle\EE\big[\varphi\big(\Wh_{[0,T]}\big) \big| \Ff_t\big]$ not only depends on the past trajectory~$\Wh_{[0,t]}$ but also on the ``forward curve"~$(\Theta^t_s:=\EE[\Wh_s |\Ff_t])_{s\in[t,T]}$. This infinite-dimensional path encodes all the necessary information to recover the Markov property in the space of continuous paths~$C^0([0,T],\RR^d)$. The functional Itô formula is then established for processes of the type~$u(t,\Wh \otimes_t\Theta^t)$ where~$(\bx\otimes_t\theta)_s := \bx_s \one_{t>s} + \theta_s \one_{t\le s}$. It involves Fréchet derivatives in $C^0([t,T])$ and thereby only perturbs the path~$\Theta^t$. This choice of state space seems natural but has a notable drawback: for Volterra processes (and rough volatility models) the direction of the derivative turns out to be~$K(\cdot,t)$, which is not continuous over~$[t,T]$. This singularity has to be circumvented via an approximation argument, as described in~\cite{viens2019martingale}, which imposes stringent conditions on~$u$ and its derivatives.
Thus, the operator appearing in the functional Itô formula is the limit of a sequence of Fréchet derivatives.  
We note for completeness that the functional Itô formula was actually proved for a much larger class of Volterra processes. 

The framework of this paper, while natural, goes beyong the assumptions made in the aforementioned works. The non-Markovian nature of the underlying prevents the use of the functional calculus of~\cite{dupire2019functional,cont2010change}. Well-posedness results for the  Volterra-type PPDE were obtained in~\cite{viens2019martingale} and~\cite{bonesini2023rough} for state-dependent payoffs and under specific models, a Gaussian Volterra process and the log-price in the rough Bergomi model, respectively.  
The setup of variance options, which involves a path-dependent payoff and a conditional expectation, calls for a new type of representation of the option price stated in Proposition~\ref{prop:rpz}. As a result, the derivation of the PPDE differs from prior studies.

\subsection{Contributions}
In this work, we are interested in option prices of the form~$\EE[\phi(V_T)|\Ff_t]$, where~$V_T$ is a volatility derivative (think~${\rm VIX}^2_T$ as in~\eqref{eq:defVIX}) and $\sigma_t^2=f_t(\Wh_t)$.
Our contributions are threefold: 
\begin{enumerate}[]
    \item[1/] We show that such a price has the Markovian representation~$u(t,\Wh\otimes_t\Theta^t)$, for a map $u:[0,T]\times C^0([0,T],\RR^d)\to\RR$; see Proposition~\ref{prop:rpz};
    \item[2/] Under the additional assumption that~$\phi$ and~$f_t(\cdot)$ are three times differentiable with suitable growth conditions (which include the square root for~$\phi$), we prove that the map~$u$ is of class~$C^{1,2}$ with appropriate regularity and growth estimates. We further establish~$u$ as the unique classical solution to a path-dependent PDE in this class; see Theorem~\ref{thm:main}.
    \item[3/] We observe that the total implied variance (equivalently the implied volatility) satisfies a quasi-linear path-dependent PDE; see Theorem~\ref{th:IV}.
\end{enumerate}
In the case of VIX options, our first contribution shows that the payoff is not simply path-dependent, as a function of~$\Wh_{[0,T]}$, but involves the trajectory up to a time~$\Tb>T$ as well as a conditional expectation with respect to~$\Ff_T$, which perturbs the law of the underlying. In addition to the process~$\Theta$ indexed by two time variables, it thus requires the introduction of a process~$J$ indexed by three time variables which plays a central role in the analysis of the PPDE in the proof of Theorem~\ref{thm:main}.

This second contribution relies on the functional Itô formula~\cite[Theorem 3.17]{viens2019martingale}, for which we require fine estimates on the Fréchet derivatives of~$u$ (see Definition~\ref{def:Cplusalpha}). In order to present the main equation of this paper, we introduce a few definitions and refer to Section~\ref{sec:derivatives} for more details. We use the shorthand notation~$K^t:=K(\cdot,t)$ and denote its continuous truncation~$K^{\delta,t}:s\to K(s\vee(t+\delta),t)$. This allows to define~$\langle D_{\bx}^2 u(t,\bx),(K^{\delta,t},K^{\delta,t})\rangle$ as the second-order Fréchet derivative in the space~$C^0([t,T],\RR)$, and~$\big\langle \partial^2_{\bx} u(t,\bx), (K^t,K^t) \big\rangle$ as its limit when~$\delta$ tends to zero. The conditions met by $u$ and its derivatives depend on the singularity of the kernel~$K$ and require enough smoothness for this approximation to converge. In the course of proving Theorem~\ref{thm:main}, we identify additional estimates, absent from~\cite{viens2019martingale}, which seem necessary for obtaining the temporal derivative~$\partial_t u$. These estimates for the Fréchet derivatives These estimates for the second Fréchet derivative (presented in Lemma \ref{lemma:DerTwo}) involve the $L^2$--norm of the direction, in contrast to the~$L^\infty$--norm demanded in Definition~\ref{def:Cplusalpha}. Applying the functional Itô formula to the newly introduced process~$J$, we then provide a self-contained proof that~$u$ is the unique solution of the Cauchy problem 
\begin{equation*}
    \left\{\hspace{-.4cm}
\begin{array}{rl}
    &\displaystyle \partial_t u(t,\bx) + \half \big\langle \partial^2_{\bx} u(t,\bx), (K^t,K^t) \big\rangle = 0, \quad \text{for all   } (t,\bx)\in [0,T]\times C^0([0,\Tb],\RR^d),\\
    &\displaystyle  u(T,\bx) = \phi\circ\frF(\bx),
\end{array}
\right.
\end{equation*}
where~$\mathfrak{F}$ has an integral representation over~$[0,\Tb]\times\RR^d$ and which definition is given in Proposition~\ref{prop:rpz}.
 

This infinite-dimensional analogue of the heat equation has the advantage of being linear parabolic, with an explicit terminal condition.
Much effort has been dedicated to faster pricing of VIX options and even greater to the simulation of fractional processes but, meanwhile, PPDE schemes are still in their infancy (the literature consists in three papers~\cite{jacquier2023deep,sabate2020solving,saporito2021path} which all rely on inserting a discretisation of the path in a neural network).
Provided the path is encoded in an efficient way, this alternative viewpoint serves as the theoretical groundwork for new numerical schemes that spit out the whole implied volatility surface and the greeks without relying on simulation or Monte Carlo. Signature kernel methods~\cite{salvi2021signature} appear to be strong candidates as they combine a tailored encoding of the path through its signature, an efficient numerical computation and convergence guarantees. For those reasons we apply them to this problem in the recent work~\cite{pannier2024path}.

Implied volatilities are the unitless equivalent to option prices which enable practitioners to compare financial products with different features. It is obtained by inverting the Black-Scholes formula which is not an easily tractable technique. Our third contribution consists in retrieving two quasi-linear path-dependent PDEs satisfied by the total implied variance and the implied volatility.
This PDE representation was not known for volatility derivatives and is one of very few pre-asymptotic results in the literature.
The computation is inspired by~\cite{berestycki2004computing} where uniqueness and asymptotic expressions are also established.

Besides numerical schemes, PPDEs may therefore find other applications in asymptotic analysis and they were already instrumental in obtaining weak rates of convergence for rough volatility models in~\cite{bonesini2023rough}. Analogously to the finite-dimensional theory, the PPDEs offer additional tools for the analysis of complex volatility derivatives and their dynamics. On the more fundamental side, several open questions are worthy of interest, such as:
Can we relax the assumption~$\phi\in C^3$ to bare convexity, in particular to show that the Call price is in $C^{1,2}$? Is there a maximum principle of the type~$\sup_{(t,\bx)} u(t,\bx) = \sup_{\bx} u(T,\bx)$ as for the finite-dimensional heat equation? These are left for future research.

Finally, we observe in Section~\ref{sec:Markov} that, in Markovian models, if the volatility derivative does not depend on the trajectory on~$[0,T]$ (e.g. $\VIX_T$ which only acts on~$[T,T+\Delta]$) then neither does the option price. This leads to finite-dimensional PDEs displayed in Corollary~\ref{coro:Markov_PDE} and implemented for the pricing of VIX options in a two-factor Bergomi model where~$\Wh$ is an OU process. The rapidity and efficiency of this approach appoints it as an alternative to Monte Carlo methods, especially if one yearns for prices with respect to initial conditions, time to maturity, or for financial greeks.

\subsection{Organisation of the paper} The rest of the paper is arranged as follows. Section~\ref{sec:framework} introduces the model and some notations. In Section~\ref{sec:rpz} we develop and prove our first result, the Markovian representation of the option price. The path-dependent PDE is presented in Section~\ref{sec:PPDE}, followed by some additional results on the greeks, and preceded by the definitions of the pathwise derivatives and the  subspace of~$C^{1,2}$ of interest. We derive the PPDE for the implied volatility in Section~\ref{sec:IV}, discuss the Markovian case in Section~\ref{sec:Markov} and, finally, the proof of the main result is gathered in Section~\ref{sec:proof}.

\section{Framework}\label{sec:framework}
\subsection{The model}
Let us fix a filtered probability space $(\Omega,\Ff,(\Ff_t)_{t\ge0},\PP)$ as well as two finite times~$0<T\le \breve{T}$ and two intervals $[0,T]$ and $[0,\Tb]$. The conditional expectation with respect to the filtration will be denoted, for all $t\in[0,\Tb]$, as $\EE_t[\cdot]:=\EE[\cdot \lvert \Ff_t]$. Let~$m,d\in\NN$, let $K:\RR_+^2\to \RR^{d\times m}$ be a square-integrable matrix-valued kernel such that $K(t,r)=0$ for $t<r$ and $\norm{K}_2:=\sup_{t\in[0,\Tb]}\int_0^t \abs{K(t,r)}^2\dr $ is finite, and let~$W$ be a standard $m$-dimensional Brownian motion with respect to~$(\Ff_t)_{t\ge0}$.
We introduce the Gaussian Volterra process 
\begin{equation}\label{eq:Def_Wh}
    \Wh_t := \gamma_t+ \int_0^t K(t,r) \D W_r, \quad t\in[0,\Tb],
\end{equation}
with mean $(\gamma_t)_{t\in[0,\Tb]}$, a deterministic continuous path, and covariance structure
$$
\EE\left[ \big(\Wh_t-\gamma_t\big)\big( \Wh_s-\gamma_s\big)^\top\right] = \int_0^{t\wedge s} K(t,r) K(s,r)^\top \dr, \quad s,t\in[0,\Tb].
$$
We make the standing assumption that~$\Wh$ is a continuous stochastic process. This condition is
directly linked to the behaviour around zero of the function~$\mathcal{K}(\tau):=\sup_{s,t\in[0,\Tb], \abs{t-s} \le\tau} \int_0^t \abs{K(t,r)-K(s,r)}^2 \dr$ and can be checked easily using Kolmogorov's or Fernique's continuity criterion, see~\cite[Lemma 2.3]{gulisashvili2021time} for the latter. All the kernels presented in Example~\ref{ex:kernels} do satisfy this condition.

Definition~\eqref{eq:Def_Wh} is slightly more general than what is usually found in the literature where~$\gamma\equiv0$. In this case, the Gaussian process has memory of the past until an arbitrary fixed time~$t=0$. However, when pricing an option at time~$t=0$ (the moment the option is issued), one may also take into account prior information; this is precisely what the function~$\gamma$ contains. 
This paper only considers deterministic paths~$\gamma$, yet several models were proposed in the literature featuring~$\Ff_0$-measurable initial curves~$\gamma$. Two noteworthy instances are the original fractional Brownian motion of Mandelbrot and Van Ness~\cite{mandelbrot1968fractional} and the Brownian semistationary processes~\cite{bennedsen2022decoupling}.

The variance derivatives we will look at are encompassed by this general definition:
\begin{align}
    V_T := \EE_T \int_{0}^{\Tb} \sigma^2_s  \ds, \quad \text{where}\quad   \sigma^2_s := f_s(\Wh_s) \quad \text{for all  } s\in[0,\Tb],
\end{align}
and $f:\RR_+\times  C^0 ([0,\Tb], \mathbb{R}^d)\to \RR_+$ is a continuous function. This includes forward variance models as introduced by Bergomi~\cite{bergomi2008smile2}, see also Example~\ref{ex:volfct}. 
$$
\Dd:= \overline{\{t\in[0,\Tb]\,\lvert \exists x\in\RR^d\text{   such that   } f_t(x)\neq0\}}.
    $$
The following example illustrates this.  
\begin{example}[Derivatives]\label{ex:derivatives}
    We consider a continuous time stochastic volatility model with no interest rate defined by the SDE~$\D S_t/S_t = \sigma_t\D B_t$.
    \begin{itemize}
        \item Letting $\Tb=T+\Delta$ (with $\Delta=30/365$ corresponding to one month) and $f_s(x)=0$ for $s<T$ (hence~$\Dd=[T,T+\Delta]$) yields $V_T=\Delta^2 \VIX^2_T$. For~$\phi:\RR_+\to\RR$, the conditional expectation~$\EE_t[\phi(V_T)]$ corresponds to the price at time~$t\in[0,T]$ of a VIX future if~$\phi(x)=\frac{\sqrt{x}}{\Delta}$ and a VIX Call if~$\phi(x)=(\frac{\sqrt{x}}{\Delta}-K)_+$ with~$K>0$.
        \item If one sets $\Tb=T$ then $V_T/T$ is the realised variance and~$\Dd=[0,T]$. For~$K>0$, the random variable~$\EE_t[\phi(V_T)]$ corresponds to the price at time~$t\in[0,T]$ of a variance swap if~$\phi(x)=\frac{x}{T}-K$ and a Call on realised variance if~$\phi(x)=(\frac{x}{T}-K)_+$ 
    \end{itemize} 
\end{example}
We can also give examples of popular models covered by our setup.
\begin{example}[Kernels]\label{ex:kernels}
The following types of one-dimensional kernels account for most of the models found in the literature. One can build matrix-valued kernels by taking those as entries.
For~$c\in\RR$:
    \begin{itemize}
        \item \emph{Exponential:} $K(t,r)=c\E^{-\beta (t-r)}$, for~$\beta>0$, means that~$\Wh$ is an Ornstein-Uhlenbeck process as chosen in~\cite{bergomi2008smile2,abi2025joint}.
         \item \emph{Power-law:} $K(t,r)=c(t-r)^{H-\half}$ corresponds to the class of rough volatility models for~$H\in(0,1/2)$~\cite{bayer2016pricing} and long-memory fractional volatility models for~$H\in(1/2,1)$~\cite{comte1996long}. Here $\mathcal{K}(\tau) \le C \tau^{2H}$ hence Kolmogorov's continuity theorem implies that the trajectories of (a modification of)~$\Wh$ are~$\alpha$-Hölder continuous for all~$\alpha<H$. 
        \item \emph{Shifted power-law:} $K(t,r)=c(t+\ep-r)^{H-\half}$ yields a semimartingale path-dependent model whenever~$\ep>0$ and extends the range of the Hurst exponent to~$(-\infty,\half]$. We refer to~\cite{abi2025joint} for more details.
        \item \emph{Gamma:} $K(t,r)=c \E^{-\beta (t-r)} (t-r)^{H-\half}$, for~$\beta>0$ and~$H\in(0,1/2)$, leads to a rough volatility model with an exponential damping and the same continuity properties as above. We refer for instance to~\cite{bennedsen2017hybrid} and the references therein.
        \item \emph{FBM:} $K(t,r)=c\big[(t/r)^{H-\half}(t-r)^{H-\half}-(H-1/2)s^{\half-H}\int_s^t u^{H-3/2}(u-s)^{H-\half}\du\big]$ recovers the fractional Brownian motion of Mandelbrot and Van Ness~\cite{mandelbrot1968fractional} in the case~$H\in(0,1/2)$, which also has $\alpha$-Hölder continuous paths for all~$\alpha<H$. A proof of this representation can be found in~\cite[Proposition 5.1.3]{nualart2006malliavin}.
        \item \emph{Log-fBM:} $K(t,r)=c(t-r)^{H-\half} \max(\zeta\log((t-r)^{-1}),1)^{-p}$, where~$H\in[0,\half)$, $\zeta>0$ and~$p>1$. For~$H>0$, the associated process has the same continuity property as in the power-law case, however this log-modulation includes the case~$H=0$ which also induces a continuous process~\cite{bayer2021log}.
    \end{itemize}
When the power-law is present, the constant is usually set to~$c=\Gamma(H+\half)^{-1}$.
\end{example}

\begin{example}[Volatility functions]\label{ex:volfct}
    Different choices of non-linearity have been proposed, as the introduction witnesses. All of them can be multiplied by an initial curve~$\zeta:\RR_+\to\RR_+$ to match the forward variance curve. Let~$\Ee:L^2(\Omega,\RR)\to\RR$ denote the Wick stochastic exponential~$\Ee(X):=\exp(X-\half\EE[X^2])$.
    \begin{itemize}
        \item One-factor (rough) Bergomi model: $f_t(\Wh_t) = \zeta(t) \Ee(\nu\Wh_t)$, for~$\nu\in\RR$.
        \item Multi-factor (rough) Bergomi model: 
        \begin{equation}\label{eq:2FRB}
             f_t(\Wh_t) = \zeta(t) \sum_{i=1}^d \lambda_i \Ee\Big(\Wh^i_t\Big),
        \end{equation}
        where~$\lambda\in\RR_+^d$ and $\sum_{i=1}^d \lambda_i=1$ so that~$f(0,0)=\zeta(0)$. Note that, for~$i\neq j$, $\Wh^i$ and~$\Wh^j$ are correlated if the kernel matrix~$K$ is not diagonal. 
        \item A variety of models are studied by R{\o}mer~\cite{romer2022empirical}, including the two-factor rough Bergomi model, the two-factor hyperbolic model which consists in replacing~$\Ee$ by~$H(x)=x+\sqrt{x^2+1}$ in~\eqref{eq:2FRB}, and a two-factor model that uses both hyperbolic and quadratic transformations. The details of the latter are shown in~\cite[Eqs (45)-(48)]{romer2022empirical}.
        \item Quintic Ornstein-Uhlenbeck model~\cite{abi2025joint}: 
        $$
f_t(\Wh_t) = \zeta(t) \frac{p(\Wh_t)^2}{\EE[p(\Wh_t)^2]}, \quad p(x) = \alpha_0 + \alpha_1 x + \alpha_3 x^3 + \alpha_5 x^5, 
        $$
        where~$\alpha_0,\alpha_1,\alpha_3,\alpha_5\ge0$. In this model, $\Wh$ is an OU process with fast-mean reversion and high vol-of-vol, i.e. $\beta$ and $c$ are large in the first kernel of Example~\ref{ex:kernels}.
    \end{itemize}
\end{example}

\subsection{Notations}
The notation $\abs{\cdot}$ corresponds to the Euclidean norm in~$\RR^d$ and Frobenius norm in~$\RR^{d\times d}$. Let $C^0$ and $D^0$ be the spaces of continuous and càdlàg paths, respectively. 
We introduce the following notations:
\begin{align*}
    &\norm{\bx}_{\infty} = \sup_{t \in [0,\Tb]} \abs{\bx_s}, \quad {\rm d}((t,\bx),(\tb,\bxb)):= \abs{t-\tb} + \norm{\bx-\bxb}_{\infty},\quad 
    \Lambda := [0,T] \times C^0 ([0,\Tb], \mathbb{R}^d),\\
    &\widetilde{\Lambda} := \Big\{(t, \bx)\in[0,T] \times D^0 ([0,\Tb], \mathbb{R}^d):\bx\!\mid_{[t, \Tb]}\in C^0 ([t,\Tb], \mathbb{R}^d) \Big\},\\
    &\Ww_t := \Big\{\bx \in D^0 ([0,\Tb], \mathbb{R}^d) : \bx \lvert_{[0, t)} \, = 0 \text{ and } \bx\lvert_{[t,T]}\in C^0([t,T],\RR^d) \Big\}.
\end{align*}
Let $C^0(\widetilde{\Lambda}):=C^0(\widetilde{\Lambda}, \RR)$ denote the set of functions~$u:\widetilde{\Lambda}\to\RR$ continuous under ${\rm d}$. 
In the remainder, the time horizon $T$ will correspond to the maturity of the derivatives, which are then stochastic processes on~$[0,T]$. Meanwhile, the underlying Gaussian process is defined on $[0,\Tb]$, hence its trajectories live in $ C^0 ([0,\Tb], \mathbb{R}^d)$. 
For $(t,\bx)\in\Lambda$, the direction~$\eta$ of the Fréchet derivative will belong to~$\Ww_t$ to ensure that the path~$\bx$ is only perturbed after time $t$, and hence the perturbed path~$\bx+\eta$ will live in~$\widetilde{\Lambda}$.

\section{The Markovian representation} \label{sec:rpz}
For a function $\phi:\RR\to\RR$, the main protagonist of this paper is the stochastic process $\big\{\EE_t[\phi(V_T)] : t\in[0,T]\big\}$ which represents the price of the volatility derivative under a risk-neutral measure.
For all $0\le t<s$, one of the central ideas behind the functional Itô formula of \cite{viens2019martingale} is to split $\Wh$ in two integrals:
\begin{equation}\label{eq:Decomposition_Theta}
\Wh_s = \left(\gamma_s+\int_0^t  K(s,r)\D W_r\right) +\int_t^s K(s,r)\D W_r  =: \Theta^t_s + I^t_s,
\end{equation}
where $\Theta^t$ is an $\Ff_t$-measurable process while $I^t_s$ is independent of $\Ff_t$ for all $s>t$. This orthogonal decomposition into two processes with two time indexes arises naturally when considering conditional expectations.
It is clear that in the Brownian motion case $K\equiv1$, this decomposition boils down to $W_s=W_t +(W_s-W_t)$.
For $t\in[0,T]$ and two paths~ $\bx,\theta\in C(\RR_+,\RR^d)$, their concatenated path reads $(\bx\otimes_t\theta)_s := \bx_s \one_{t>s} + \theta_s \one_{t\le s}$. For $t\in[0,T]$, we further introduce the process
$$
\Th^t := \Wh\otimes_t \Theta^t = \EE_t[\Wh_\cdot],
$$
and for all~$s\in[0,\Tb]$, we notice that~$\Th^t_s=\int_0^{t\wedge s}K(s,r)\D W_r=\int_0^{t}K(s,r)\D W_r$ since~$K(s,r)=0$ for~$s<r$. Therefore~$\Th$ is an extension of~$\Theta$ to the domain~$[0,\Tb]$. 
These notations would be sufficient were we to consider path-dependent payoffs~$\phi(\Wh)$ as in~\cite{viens2019martingale}. However, handling the conditional expectation in the definition of~$V_T$ itself requires the introduction of a process indexed by three time variables
\begin{align*}
    J^{t,T}_s:=
    \Th^T_s-\Th^t_s=
    \int_t^{s\wedge T} K(s,r)\D W_r=
    \int_t^{T} K(s,r)\D W_r.
\end{align*}
For $t>s$, note that~$J^{t,T}_s=0$ since~$K(s,r)=0$.  
These processes are at the core of our first main result.
\begin{proposition}\label{prop:rpz}
   Let~$\phi:\RR\to\RR$ and~$f:\Dd\times\RR\to\RR_+$ be measurable functions such that~$\EE[\abs{\phi(V_T)}]<\infty$. The measurable map $u:\Lambda\to\RR$ defined by
       \begin{equation}\label{eq:Def_u}
        u\big(t, \bx\big)
       : =\EE\Big[\phi\circ\frF\Big(
    \big\{ \bx_s + J^{t,T}_s :s\in\Dd\big\}\Big)\Big],
    \end{equation}
    where
    \begin{equation}\label{eq:Def_frF}
        \frF(\bx) := \int_0^T f_s(\bx_s)\ds + \int_T^{\Tb} \EE\Big[f_s\big(\bx_s+I^T_s\big)\Big]\ds,
    \end{equation}
    allows for the following representation of the conditional expectation
    $$
u\Big(t,\Big\{ \Th^t_s  : s\in[0,\Tb]\Big\}\Big)=\EE_t\big[\phi(V_T)\big] \qquad \text{for all   } t\in[0,T].
    $$
This map will be referred to as the option price or the value function.
\end{proposition}
\begin{remark}
    Sufficient conditions for $\EE[\abs{\phi(V_T)}]<\infty$ to hold will be given in Section~\ref{sec:main}.
\end{remark}
\begin{remark}
In the VIX case we have $\Th^{t}_s=\Theta^t_s$ for all $s\in\Dd=[T,\Tb]$, in other words the option price does not depend on the past $\{\Wh_s:s\in[0,T]\}$, but depends on a path that runs after the maturity $T$.
On the other hand, the RV option corresponds to~$\Dd=[0,T]$ where~$J^{t,T}_s=\int_t^s K(s,r)\D W_r$.  
\end{remark}
\begin{proof}
We recall that 
\begin{align*}
    \EE_t\big[\phi(V_T)\big]= \EE_t\left[\phi\left(\int_{0}^{\Tb} \EE_T[\sigma^2_s]  \ds\right)\right],
\end{align*}
which features two convoluted conditionings at $t$ and $T$.

We first take a look at $V_T$.
For $s<T$, $\EE_T [\sigma^2_s]=\sigma^2_s$, while for $s\ge T$, 
$\sigma^2_s =f_s(\Theta^T_s +I^T_s)$ as in \eqref{eq:Decomposition_Theta}.
Since $I^T_s$ is independent of $\Ff_T$ and $\Theta^T_s$ is $\Ff_T$-measurable, there exist two measurable functions~$\frf:\RR_+\times\RR\to\RR$ and $\frF: C^0 ([0,\Tb], \mathbb{R}^d)\to\RR$ such that
\begin{align}
    V_T=\EE_T \int_0^{\Tb} \sigma^2_s\ds
    =\int_0^{T} f_s(\Wh_s)\ds + \int_T^{\Tb} \EE_T\left[ f_s\big( \Theta^T_s + I^T_s \big)\right]\ds
    &=: \int_0^{T} f_s(\Wh_s)\ds + \int_T^{\Tb} \frf_s(\Theta^T_{s})\ds \nonumber\\
    &=\mathfrak{F} \Big( \Big\{\Th_s^T : s\in \Dd\Big\} \Big). 
    \label{eq:VT_frF}
\end{align}
Let $s\in\Dd$ and $\theta\in\RR^d$, then we can define explicitly $\frf_s(\theta)=\EE[f_s(\theta+I^T_s)]$. Note that $I^T_s$ has a Gaussian distribution with zero mean and variance $\displaystyle \Sigma_s:=\int_T^s K(s,r)K(s,r)^\top \dr$. Hence, for $\theta\in \RR^d$,
\begin{equation}\label{eq:fstheta}
\mathfrak{f}_s(\theta) = \int_{\RR^d}  f_s(\theta+y) p_s(y) \D y,
\end{equation}
where $p_s:\RR^d\to\RR$ is the density function of $\mathcal{N}\left(\bm{0}_d, \Sigma_s\right)$ if~$\Sigma_s$ is non-degenerate.
Furthermore, we clearly have
\begin{equation*}
\frF(\bx) = \int_0^T f_s(\bx_s)\ds + \int_T^{\Tb} \frf_s(\bx_s)\ds.
\end{equation*}
We turn to the second conditioning with respect to $t\in[0,T]$. 
We decompose the path $\Wh\otimes_T\Theta^T$, for all~$s\in [0,\Tb]$:
\begin{align}
    \Th^T_s 
    &= \int_0^{T\wedge s} K(s,r)\D W_r
    = \int_0^{t\wedge s} K(s,r)\D W_r + \int_t^{T\wedge s} K(s,r)\D W_r 
    = \Th^t_s + J^{t,T}_s, \label{eq:def_JtT}
\end{align}
where we recall that~$J^{t,T}_s=\int_t^{T} K(s,r)\D W_r$. Introduce the map~$u:\Lambda\to\RR$ by
\begin{equation*}
        u\big(t, \{ \bx_s: s\in[0,\Tb]\} \big)
       : =\EE\Big[\phi\circ\frF\Big(
    \big\{ \bx_s + J^{t,T}_s :s\in\Dd\big\}\Big)\Big].
    \end{equation*}
Since $J^{t,T}_{s}$ is independent of $\Ff_t$, we can write options on $V_T$ under our stochastic volatility model as 
\begin{align}\label{eq:uMarkov}
    u\Big(t,\Big\{ \Th^t_s  :s\in[0,\Tb]\Big\}\Big)
    &=  \EE\bigg[\phi\circ\frF\Big(
    \Big\{ \Th^t_s + J^{t,T}_s :s\in\Dd\Big\}\Big)\Big\lvert \Th^t\bigg] \\
    &= \EE\Big[\phi\circ \mathfrak{F} \Big( \Big\{\Th^T_s : s\in \Dd\Big\} \Big) \Big\lvert \Ff_t \Big]  \nonumber\\
    &= \EE_t\Big [\phi(V_T)\Big],   \nonumber
\end{align}
almost surely, where we expressed $V_T$ as in \eqref{eq:VT_frF}. 
\end{proof}
\begin{remark}
In the VIX case ($\Dd=[T,\Tb]$) we have $J^{t,T}=\Theta^T - \Theta^t$ and in the RV case ($\Dd=[0,T]$) we have $J^{t,T}=I^t$.    
\end{remark}
In addition to Proposition \ref{prop:rpz}, one can give an insightful peak on the path-dependence structure of~$V_T$.
For $t\in[0,T], s\in[t,\Tb]$ and $\bx\in  C^0 ([0,\Tb], \mathbb{R}^d)$, let us define
\begin{equation}\label{eq:defs_tx}
    \Wh_s^{t,\bx} := \bx_s + \int_t^s K(s,r)\D W_r \quad \text{and} \quad
    V^{t,\bx}_T :=  \EE_T \int_{0}^{\Tb} f_s\big(\Wh_s^{t,\bx}\big) \ds.
\end{equation}
 Notice that, for all~$t\in[0,\Tb]$, $\Wh^{t,\Th^t}=\Wh$ and therefore $V_T^{t,\Th^t}=V_T$. By fixing the trajectory $\Th^t=\bx$ for some $\bx\in C^0 ([0,\Tb], \mathbb{R}^d)$, Equation \eqref{eq:Def_u} yields 
    \begin{equation}\label{eq:Vtomega}
        u\big(t, \{ \bx_s: s\in[0,\Tb]\} \big)
        =\EE\Big[ \phi\big(V_T^{t,\bx}\big) \Big].
    \end{equation}
The role of the shift~$\gamma$ is made more precise here. We observe that~$\Wh=\Wh^{0,\gamma}$ hence~$\EE[\phi(V_T)]=\EE[\phi(V_T^{0,\gamma})]=u(0,\gamma)$. Pricing at time~$t=0$ without including~$\gamma$ in the model comes down to computing~$u(0,0)$ and ignoring all the information that occured before~$t=0$. Introducing this shift provides a more coherent theory across all times.
    
The representation~\eqref{eq:Vtomega} allows to describe a type of time invariance for the option price. Let us write~$u(t,\bx;T,\Dd)=u(t,\bx)$ to highlight the dependence in the terminal time and the time interval. It is common for solutions to PDEs on~$[0,T]\times\RR^d$ to have the property~$u(t,x;T)=u(0,x;T-t)$.  A similar phenomenon takes place in our setting under the assumption that~$K$ is of convolution type.
\begin{corollary}\label{coro:time_invariance}
    Let~$0 \le T \le \Tb$ and assume there exists~$\tilde{K}\in L^2([0,\Tb],\RR^{d\times m})$ such that~$K(s,t)=\tilde{K}(s-t)$ for all~$0\le t\le s\le \Tb$.
    Then for all~$\bx\in C^0([0,\Tb],\RR^d)$ and measurable functions~$\phi:\RR\to\RR$ and~$f:\Dd\times\RR^d\to\RR$, we have
    $$
u(t,\bx;T,\Dd) = u(0,\bx_{t+\bullet};T-t,\Dd-t),
    $$
    where~$\Dd\subset[0,\Tb]$ is the support of~$f_\bullet$ and~$s\in\Dd-t$ if~$s+t\in\Dd$.
\end{corollary}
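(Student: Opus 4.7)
The plan is to exploit the explicit formula \eqref{eq:Def_u} for $u$ together with the translation invariance of Brownian motion, which is precisely what the convolution hypothesis on $K$ unlocks. I would begin by introducing the shifted Brownian motion $W'_r := W_{t+r}-W_t$ for $r \ge 0$, which by stationarity and independence of increments is again a Brownian motion with the same law as $W$.

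The key step is the change of variables $s = t+s'$, $r = t+r'$ inside the stochastic integrals from \eqref{eq:def_JtT} and in the definition of $I^T$. Using $K(s,r) = \tilde K(s-r)$, this yields, for every $s' \ge 0$,
\begin{equation*}
J^{t,T}_{t+s'} = \int_0^{s'\wedge(T-t)} \tilde K(s'-r')\,\D W'_{r'}, \qquad I^{T}_{t+s'} = \int_{T-t}^{s'} \tilde K(s'-r')\,\D W'_{r'},
\end{equation*}
so that the processes $J^{t,T}_{t+\cdot}$ and $I^T_{t+\cdot}$ coincide pathwise with the natural analogues of $J^{0,T-t}$ and $I^{T-t}$ for a problem of horizon $T-t$, driven by $W'$ in place of $W$. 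Because \eqref{eq:fstheta} shows that $\frf_{t+s'}(\theta)$ depends only on the law of $I^T_{t+s'}$, it then coincides with the analogue $\frf^{(t)}_{s'}$ of the shifted problem built from the time-translated non-linearity $f^{(t)}_{s'}(\cdot) := f_{t+s'}(\cdot)$, whose support is exactly $\FF - t$.

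Finally, substituting $s = t+s'$ in the two integrals defining $\frF$ in \eqref{eq:Def_frF} converts $\frF(\{\omega_s + J^{t,T}_s : s \in \FF\})$ into the shifted-problem quantity $\frF^{(t)}(\{\omega_{t+s'} + J^{t,T}_{t+s'} : s' \in \FF - t\})$. Taking the outer expectation in \eqref{eq:Def_u} and using $W' \stackrel{d}{=} W$ to replace $J^{t,T}_{t+\cdot}$ by its shifted-problem counterpart will then identify the resulting expression with $u(0, \omega_{t+\bullet}; T-t, \FF-t)$, which is the desired identity. I expect the main subtlety to be not analytical but bookkeeping: carefully tracking the two nested conditionings (outer at $t$, inner at $T$) through the time-shift, and maintaining the implicit convention that on the right-hand side $u$ is evaluated with the time-translated non-linearity $f^{(t)}$ so that the identity between value functions is literal.
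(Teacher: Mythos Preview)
Your proposal is correct and follows essentially the same route as the paper: both exploit the convolution hypothesis together with stationarity of Brownian increments to translate the stochastic integrals, then reindex the Riemann integrals defining~$\frF$. The paper argues directly that $J^{t,T}_s \overset{\mathrm{law}}{=} J^{0,T-t}_{s-t}$ and substitutes, whereas you construct the shifted Brownian motion $W'$ explicitly and work pathwise before invoking $W'\overset{d}{=}W$; you are also more explicit than the paper about the inner conditioning (matching $\frf_{t+s'}$ with $\frf^{(t)}_{s'}$ via the law of $I^T_{t+\cdot}$), which the paper leaves implicit in the phrase ``which yields the claim.''
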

\begin{remark}
    For both VIX and Realised Variance, $\Dd-t$ corresponds precisely to the interval of interest for an option of maturity~$T-t$. 
In financial terms, this invariance property means one can translate the price at time~$t$ of an option of maturity~$T$  to the price at time~$0$ of an option of maturity~$T-t$, provided one shifts the path accordingly. 
\end{remark}
\begin{proof}
    Let~$0\le t \le T \le \Tb$ and $\bx\in C^0([0,\Tb],\RR^d)$.
    We first show that~$J^{t,T}_s=J^{0,T-t}_{s-t}$ in distribution:
    \begin{align*}
        J^{t,T}_s = \int_t^{T} \tilde{K}(s-r) \D W_r 
        &=  \int_0^{T-t} \tilde{K}(s-t-r) \D W_{r+t} \\
        &\overset{\mathrm{law}}{=}  \int_0^{T-t} \tilde{K}(s-t-r) \D W_{r} 
        = J^{0,T-t}_{s-t}.
    \end{align*}
    Equation~\eqref{eq:Def_u} entails
    \begin{align*}
        u(t,\bx;T,\Dd) 
        = \EE\left[\phi\circ\frF\left(\big\{\bx_s+J^{t,T}_s:s\in\Dd\big\}\right)\right]
        &=\EE\left[\phi\circ\frF\left(\big\{\bx_s+J^{0,T-t}_{s-t}:s\in\Dd\big\}\right)\right]\\
        &=\EE\left[\phi\circ\frF\left(\big\{\bx_{s+t}+J^{0,T-t}_s:s+t\in\Dd\big\}\right)\right],
    \end{align*}
    which yields the claim.
\end{proof}


\section{The path-dependent pricing PDE}\label{sec:PPDE}
Proposition \ref{prop:rpz} confirms the natural intuition that options on variance, whether realised or VIX, are functions of a path. Thus the associated pricing PDE is of path-dependent type as we will see in this section. 

\subsection{The pathwise derivatives}
\label{sec:derivatives} 
Let us define the right time derivative
$$
\partial_t u(t,\bx):= \lim_{\ep\downarrow0}\frac{u(t+\ep,\bx)-u(t,\bx)}{\ep},
$$
for all~$(t,\bx)\in\widetilde{\Lambda}$, provided the limit exists. We also define the Fréchet derivative~$D_{\bx} u(t,\bx)$ with respect to~$\bx\one_{[t,T]}$, which is a linear operator on ~$ \Ww_t$: 
\begin{equation}
    u(t,\bx+\eta)-u(t,\bx) = \langle D_{\bx} u(t,\bx),\eta\rangle
    +o(\norminfty{\eta}), \quad \text{for all   } \eta\in \Ww_t.
    \label{eq:PathwiseDerDef}
\end{equation}
We recall that any $\eta\in\Ww_t$ is equal to zero on $[0,t)$ hence the derivative in this particular direction only perturbs the path on~$[t,T]$. 
If it exists, it is equal to the Gateaux derivative
\begin{equation}
\lim_{\ep\downarrow0}\frac{u(t,\bx+\ep\eta)-u(t,\bx)}{\ep}.
\label{eq:PathwiseDerDef2}
\end{equation}
The second derivative~$D_{\bx}^2 u(t,\bx)$, defined for all $\eta,\etab\in \Ww_t$ as
\begin{align*}
    \langle D_{\bx}u(t,\bx+\eta),\etab\rangle
    -\langle D_{\bx}u(t,\bx),\etab\rangle
    = \langle D^2_{\bx} u(t,\bx),(\eta,\etab)\rangle + o(\norm{\eta}_{\infty}),
\end{align*}
is a bilinear operator on $ \Ww_t\times  \Ww_t$. We say that~$u\in \Cc^{1,2}(\widetilde{\Lambda})$ if $\partial_t u$, $D_\bx u$ and $D^2_\bx u$ exist and are continuous on $\widetilde{\Lambda}$, that is: for all $\eta\in C^0 ([0,\Tb], \mathbb{R}^d)$, $\widetilde{\Lambda}\ni (t,\bx)\mapsto  \partial u(t,\bx)$ is continuous under $d$ for all~$\partial\in\big\{\partial_t, \langle D_\bx \bullet,\eta\rangle, \langle D_\bx^2 \bullet,(\eta,\etab)\rangle\big\}.$

If $\eta$ is an $\RR^{d\times m}$-valued path (like~$K^t$) with $\eta^{(j)}$ being the ($\RR^d$-valued) $j$th column then, by a mild abuse of notation, we define
\begin{align*}
    \big\langle D_{\bx} u(t,\bx),\eta \big\rangle
    := \Big(\big\langle D_{\bx} u(t,\bx),\eta^{(j)}\big\rangle\Big)_{j=1}^m,\quad \text{and}\quad
    \big\langle D_{\bx}^2 u(t,\bx),(\eta,\etab)\big\rangle 
    := \sum_{j=1}^m \big\langle D_{\bx}^2 u(t,\bx),(\eta^{(j)},\etab^{(j)})\big\rangle.
\end{align*}
The solution space is adapted to the singularity of the kernel $K$, hence we start with describing the latter.
\begin{assumption}\label{assu:kernel}
    For any $0\le t< s$, $\partial_s K(s,t)$ exists and there exist $C>0$ and $H\in(0,1)$ such that
    $$
\abs{K(s,t)}\le C (s-t)^{H-\half} \qquad \text{and} \qquad \abs{\partial_s K(s,t)}\le C (s-t)^{H-3/2}.
    $$
\end{assumption}
This assumption includes most kernels found in the literature, in particular those presented in Example~\ref{ex:kernels}, moreover it does not impose any condition on the structure but only on the speed of the explosion in the diagonal.
\begin{definition}\label{def:Cplusalpha}
    We say that $u\in \Cc^{1,2}_{\alpha}(\Lambda)$, with $\alpha\in(0,1)$, if 
    there exists an extension of~$u$ in~$\Cc^{1,2}(\widetilde{\Lambda})$, still denoted as~$u$, a growth order~$\kappa>0$ and a modulus of continuity function~$\varrho$ such that, for any~$ t\in[0,T], \,0<\delta\le \Tb-t$, and~$\eta,\etab\in \Ww_t$ with supports contained in~$[t,t+\delta]$, the following hold:
    \begin{enumerate}
        \item[(i)] for any~$\bx\in D^0 ([0,\Tb], \mathbb{R}^d)$ such that~$\bx\one_{[t,\Tb]}\in C^0 ([t,\Tb], \mathbb{R}^d)$,
        \begin{align}
        \label{eq:DerOne_bound}
            \abs{\langle  D_{\bx} u(t,\bx),\eta\rangle} &\le C(1+\E^{\kappa \norm{\bx}_\infty}) \norm{\eta}_\infty\delta^\alpha,\\
        \label{eq:DerTwo_bound}
            \abs{\langle  D_{\bx}^2 u(t,\bx),(\eta,\etab)\rangle} &\le C(1+\E^{\kappa \norm{\bx}_\infty}) \norm{\eta}_\infty\norm{\etab}_\infty\delta^{2\alpha}.
        \end{align}
        \item[(ii)] for any other~$\bxb\in D^0 ([0,\Tb], \mathbb{R}^d)$ such that~$\bxb\one_{[t,T]}\in C^0 ([t,\Tb], \mathbb{R}^d)$,
        \begin{align}
        \label{eq:DerOne_reg}
            \abs{\langle  D_{\bx} u(t,\bx)- D_{\bx} u(t,\bxb),\eta\rangle} &\le C\Big(1+\E^{\kappa \norm{\bx}_\infty}+\E^{\kappa \norm{\bxb}_\infty} \Big) \norm{\eta}_\infty\varrho(\norm{\bx-\bxb}_\infty) \delta^\alpha,\\
        \label{eq:DerTwo_reg}
            \abs{\langle  D_{\bx}^2 u(t,\bx)- D_{\bx}^2 u(t,\bxb),(\eta,\etab)\rangle} &\le C\Big(1+\E^{\kappa \norm{\bx}_\infty}+\E^{\kappa \norm{\bxb}_\infty} \Big) \norm{\eta}_\infty\norm{\etab}_\infty\varrho(\norm{\bx-\bxb}_\infty)\delta^{2\alpha}.
        \end{align}
        \item[(iii)] For any~$\bx\in C^0 ([0,\Tb], \mathbb{R}^d)$, $t\mapsto\langle  D_{\bx} u(t,\bx),\eta\rangle$ and $t\mapsto\langle  D_{\bx}^2 u(t,\bx),(\eta,\etab)\rangle$ are continuous.
    \end{enumerate}
\end{definition}
This definition is an adaptation of~\cite[Definition 2.4]{viens2019martingale} where polynomial growth in~$\bx,\bxb$ was imposed.
The introduction of the parameter $\alpha$ touches upon a technical specificity of the singular kernel case. Setting~$\alpha>1/2-H$ ensures that the decaying factors $\delta^\alpha$ and~$\delta^{2\alpha}$ on the right hand side balances the explosion of $K$ quantified in Assumption \ref{assu:kernel}. 
Notice that in the regular case $H\ge1/2$ one can choose $\alpha=0$; in the rough case $H\in(0,1/2)$, we will show that the value functional $u$ defined in \eqref{eq:Def_u} belongs to $\Cc^{1,2}_{\alpha}(\Lambda)$ with $\alpha=1/2$.  

We then extend the domain of the Fréchet derivative via an approximation near the diagonal.
For~$0\le t <s \le \Tb$, we introduce the truncated kernel
$$
K^{\delta}(s,t):= K(s\vee(t+\delta),t),
$$
and the notations $K^t(s):=K(s,t)$ and~$K^{\delta,t}(s):= K^\delta (s,t)$.
For $u\in \Cc^{1,2}_{\alpha}(\Lambda)$, the spatial derivatives are defined as limits of Fréchet derivatives~\cite[Theorem 3.17]{viens2019martingale}
\begin{align}\label{eq:Def_SingularDerivatives}
    \langle \partial_{\bx} u(t,\bx),K^t\rangle &:= \lim_{\delta\downarrow0} \langle D_{\bx} u(t,\bx),K^{\delta,t}\rangle,\\ 
    \langle \partial_{\bx}^2 u(t,\bx),(K^{t},K^{t})\rangle &:= \lim_{\delta\downarrow0} \langle D_{\bx}^2 u(t,\bx),(K^{\delta,t},K^{\delta,t})\rangle.
\end{align}

\subsection{The main result}\label{sec:main}

This section presents the main result, namely a well-posed PPDE satisfied by the option price. In order to state it, we introduce the assumptions needed on the functions $\phi$ and $f$.
\begin{definition}\
\begin{enumerate}
    \item[(i)]  We say that a function $g:(0,+\infty)\to\RR$ has polynomial growth if there exist $C_g,\overline{\kappa_g},\underline{\kappa_g}\ge0$ such that $\abs{g(x)} \le C_g (1+\abs{x}^{\overline{\kappa_g}}+\abs{x}^{-\underline{\kappa_g}})$. For $n\in\NN$, we write $g\in C^n_{{\rm poly}}$ if $g$ is $n$ times continuously differentiable and $g$ and all its derivatives have polynomial growth.
    \item[(ii)] We say that a function $h:\RR_+\times\RR^{d}\to(0,+\infty)$ has exponential growth if there exist $C_h,\overline{\kappa_h}\ge0$ such that $\abs{h(t,x)} \le C_h \left(1+\E^{\overline{\kappa_h} t} \sum_{i=1}^{d} \E^{\overline{\kappa_h} x_i} \right)$.  For $n\in\NN$, we write $h\in C^n_{{\rm exp}+}$ if $h$ is $n$ times continuously differentiable and $h$ and all its derivatives have exponential growth.
    \item[(iii)] We say that a function $h:\RR_+\times\RR^{d}\to(0,+\infty)$ has exponential decay if there exist $c_h,\underline{\kappa_h}\ge0$ such that~$\abs{h(t,x)} \ge c_h \E^{-\underline{\kappa_h} t} \sum_{i=1}^{d} \E^{-\underline{\kappa_h} x_i}$.  For $n\in\NN$, we write $h\in C^n_{{\rm exp}-}$ if $h$ is $n$ times continuously differentiable and $h$ and all its derivatives have exponential decay. Moreover, we denote~$C^n_{{\rm exp}\pm}:=C^n_{{\rm exp}+} \cap C^n_{{\rm exp}-}$.
\end{enumerate}   
\end{definition}
\begin{assumption}\label{assu:coefs}\
\begin{enumerate}
    \item[(i)] The map $\phi$ belongs to $C^3_{{\rm poly}}(\RR)$, with the growth constants $C_\phi,\overline{\kphi},\underline{\kphi}$.
    \item[(iia)] If~$\underline{\kappa_\phi}=0$, the map $f$ belongs to $C^{0,3}_{{\rm exp}+}(\Dd\times\RR^d)$, with the growth constants $C_f,\kappa_f$.
    \item[(iib)] If~$\underline{\kappa_\phi}>0$, the map $f$ belongs to $C^{0,3}_{{\rm exp}\pm}(\Dd\times\RR^d)$, with the growth constants $C_f,c_f,\overline{\kappa_f},\underline{\kappa_f}$.
\end{enumerate}
\end{assumption}
\begin{remark}
Under the additional assumption of exponential decay, our setup includes payoffs of the type~$\phi(x)=x^p$ for any~$p\in\RR$. In particular, $\phi(x)=\sqrt{x}$ is crucial to study VIX futures and options. 
\end{remark}
\begin{theorem}\label{thm:main}
    Let Assumptions \ref{assu:kernel} and \ref{assu:coefs} hold. The value functional $u$ defined in \eqref{eq:Def_u} is the unique~$\Cc^{1,2}_{\half}(\Lambda)$ solution to  the path-dependent partial differential equation, for all~$(t,\bx)\in\Lambda$,
\begin{equation}\label{eq:main_PPDE}
\def\arraystretch{1.5}
    \left\{\hspace{-.4cm}
\begin{array}{rl}
    &\displaystyle \partial_t u(t,\bx) + \half \big\langle \partial^2_{\bx} u(t,\bx), (K^t,K^t) \big\rangle = 0,\\
    &\displaystyle  u(T,\bx) = \phi\circ\frF(\bx),
\end{array}
\right.
\end{equation}
where $\mathfrak{F}$ is defined in \eqref{eq:Def_frF} as
\begin{equation}\label{eq:Def_frF_bis}
    \frF(\bx) = \int_0^T f_s(\bx_s)\ds + \int_T^{\Tb} \EE\Big[f_s\big(\bx_s+I^T_s\big)\Big]\ds
    = \int_0^T f_s(\bx_s)\ds + \int_T^{\Tb} \int_{\RR^d} f_s\big(\bx_s+y\big)p_s(y)\D y\, \ds
    , 
\end{equation}
and $p_s:\RR^d\to\RR$ is the density function of $\mathcal{N}\left(\bm{0}_d, \int_T^s K(s,r)K(s,r)^\top \dr\right)$.
\end{theorem}
\begin{remark}\label{rem:mainPDE}
Several remarks are in order.
\begin{enumerate}
    \item This PDE is not homogeneous in time because of the direction $K^t$. However, in the case of convolution kernels, Corollary~\ref{coro:time_invariance} recovers a certain time invariance property.
    \item The derivative operator itself is of (path-dependent) heat type, already derived in \cite[Theorem 4.1]{viens2019martingale}, and common to all PPDEs linked to a Gaussian Volterra process. The novelty is the path-dependent nature of the terminal condition. It exhibits an extra layer, the function~$\frF$, which is a simple Riemann integral and can be learned offline. Indeed, functionals of a path are known to be well approximated by linear maps of their signature, in which case the learning phase does not rely on a fixed discretisation grid. One can thus evaluate the learned functional on a different time grid than the one it was trained on. 
    \item The vertical derivative of Dupire morally corresponds to \eqref{eq:PathwiseDerDef2} where~$\bx$ is frozen on~$[t,\Tb]$ and~$\eta$ is constant. Hence the PPDE \eqref{eq:main_PPDE} boils down to the functional PDE introduced in~\cite{dupire2019functional,peng2016bsde} in the Brownian motion case where $K\equiv I_d$. Rigorously speaking, the two notions of derivatives differ as they are not defined on the same spaces.
    \item The assumption that $\phi$ and $f$ are three times continuously differentiable is made for convenience but can be relaxed to two times differentiable with an~$\alpha$-Hölder continuous second derivative for any~$\alpha>0$.
    \item From a financial point of view, an important question remains. Say one solves the PPDE and knows the functional~$u$, then which path~$\bx$ should one use to compute the right price? 
If one is looking for the price at time~$t$ then one should input~$\bx=\Th^t$ to recover~$u(t,\Th^t)=\EE_t[\phi(V_T^{t,\Th^t})]=\EE_t[\phi(V_T)]$. Naturally, $\Th^t$ is not observed but it can be inferred from the forward variance, which is the derivative of the variance swap. Let us consider VIX derivatives in the rough Bergomi model, which means~$V_T=V_T^{t,\Theta^t}$ and~$f_s(\Wh_s)=\exp(\nu\Wh_s -\frac{\nu^2}{4H}s^{2H})$. The forward variance is given by
    \begin{equation*}
    \qquad\qquad\xi^t_s := \EE\left[f_s(\Wh_s) |\Ff_t \right] = e^{\nu\Theta^t_s - \frac{\nu^2}{4H}(s^{2H} - (s-t)^{2H})}, \quad \text{hence}\quad \Theta^t_s = \frac{\log(\xi^t_s)}{\nu} +\frac{\nu}{4H}(s^{2H} - (s-t)^{2H}).
    \end{equation*}
    In particular, at $t=0$, we have~$\Theta^0_s=\log(\xi^0_s)/\nu$ which is only attainable for a non-constant curve~$\xi^0$ if~$\Theta^0=\gamma$ is also not constant.
\end{enumerate}
\end{remark}

The proof of Theorem~\ref{thm:main} relies on the functional Itô formula of Viens and Zhang~\cite[Theorem 3.17]{viens2019martingale} which we state in our setup. For $\bx\in C^0 ([0,\Tb], \mathbb{R}^d)$ and all~$0 \leq t\le \Tb$ we consider the processes
\begin{align*}
    &\bm{X}_s := \bx_s +\int_t^s K(s,r)\D W_r, \quad\text{and}\quad
    \bm{\Theta}^{t+h}_s:=\bx_s+\int_t^{t+h} K(s,r)\D W_r.
\end{align*}
For $F\in \Cc^{1,2}_\alpha(\Lambda)$ with $\alpha>(\half-H)^+$ and $H\in(0,1)$, the singular Itô formula holds~\cite[Theorem 3.17]{viens2019martingale}:
\begin{align}
        \D F(r,\bm{X}\otimes_r \bm{\Theta}^r) =&\,  \partial_t F(r,\bm{X}\otimes_r \bm{\Theta}^r)\dr + \half \big\langle\partial_{\bm{x}}^2 F(r,\bm{X}\otimes_r \bm{\Theta}^r),(K^r,K^r )\big\rangle \dr \nonumber\\
        &+ \big\langle \partial_{\bm{x}} F(r,\bm{X}\otimes_r \bm{\Theta}^r),K^r \big\rangle \dr + \big\langle \partial_{\bm{x}} F(r,\bm{X}\otimes_r \bm{\Theta}^r), K^r \big\rangle \,\D W_r,
        \label{eq:Ito}
    \end{align}
    where we recall that~$\bm{X}\otimes_t \bm{\Theta}^t(s)=\bm{X}_s \one_{s\le t} + \bm{\Theta}^t_s \one_{s>t}$. Note that we extended the original definition of~$\Cc^{1,2}_\alpha$ to allow for exponential growth in Definition~\ref{def:Cplusalpha}. An inspection of the proof shows that the functional Itô formula holds true under the exponential growth of Definition~\ref{def:Cplusalpha} because $\EE[\E^{\kappa \norminfty{B}}]$ is finite for any Gaussian process $B$ and $\kappa>0$ \cite[Lemma 6.13]{jacquier2021rough}.
\begin{proof}[Proof of Theorem \ref{thm:main}]
We start by embedding the present framework into the SVE setting presented above.  The main computational part of the proof is postponed to Section~\ref{sec:proof} for clarity, and is summarised in the following lemma.
    
    \parbox{0.96\textwidth}{
    \begin{lemma}\label{lemma:Czerotwo}
            Let Assumptions \ref{assu:coefs} and \ref{assu:kernel} hold. The function $u$ defined in \eqref{eq:Def_u} belongs to~$\Cc^{0,2}_{\half}(\Lambda)$ with $\varrho={\rm id}$.
    \end{lemma}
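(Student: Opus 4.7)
The plan is to differentiate the explicit expression
$$u(t,\omega) = \EE\bigl[\phi\circ\frF(\omega + J^{t,T})\bigr]$$
with respect to $\omega$ under the expectation. Since $\phi\in C^3_{\rm poly}$ and, in view of \eqref{eq:fstheta}, the smoothness of $f$ from Assumption \ref{assu:coefs} transfers to $\frf_s$ by differentiating under the Gaussian density $p_s$, we formally obtain
$$\langle \partial_\omega u(t,\omega), \eta\rangle = \EE\!\left[\phi'\bigl(\frF(\omega + J^{t,T})\bigr) \left(\int_0^T \partial_x f_s(\omega_s+J^{t,T}_s)\eta_s\ds + \int_T^{\Tb} \partial_x \frf_s(\omega_s+J^{t,T}_s)\eta_s \ds\right)\right],$$
and an analogous second-order expression combining a $\phi''$ product term with a $\phi'$ times $\partial^2_x$ term. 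Differentiation under the expectation is legitimate because the exponential growth of $f,f',f''$, the polynomial growth of $\phi',\phi''$, and the finite Gaussian exponential moments $\EE[\exp(\kappa\normTb{J^{t,T}})]<\infty$ for every $\kappa>0$ (see~\cite[Lemma~6.13]{jacquier2021rough}) produce an integrable envelope uniform in small perturbations of $\omega$ and $\eta$, so the Fréchet property follows from Taylor expansion plus dominated convergence.

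The growth bounds \eqref{eq:DerOne_bound}--\eqref{eq:DerTwo_bound} are then immediate from the support hypothesis: when $\eta,\etab$ vanish outside $[t,t+\delta]$, each inner integral runs over a set of length at most $\delta$, producing a factor $\delta\normTb{\eta}$ in the first derivative and, through either the $\phi''$ cross term or the $\phi'\partial^2_x$ diagonal term, a factor of at most $\delta\normTb{\eta}\normTb{\etab}$ in the second. Since $\delta\le\Tb$ we have $\delta\le\Tb^{1/2}\delta^{1/2}$, which is consistent with the choice $\alpha=1/2$ (so that $\delta^{2\alpha}=\delta$). The envelope $\E^{\kappa\normTb{\omega}}$ arises by bounding $|\partial_x f_s(\omega_s+J^{t,T}_s)|\le C\E^{\kappa_f|\omega_s|}(1+\E^{\kappa_f|J^{t,T}_s|})$, separating deterministic and stochastic contributions, and absorbing the latter through the Gaussian moments above.

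For the regularity estimates \eqref{eq:DerOne_reg}--\eqref{eq:DerTwo_reg}, I would expand the difference of integrands at $\omega$ and $\omegab$: the mean-value theorem applied to $\phi',\phi''$ and to $\partial_x f_s,\partial^2_x f_s$ (and their $\frf$ analogues), combined with Cauchy--Schwarz to factor the resulting exponential growth from the Lipschitz factor, yields a Lipschitz modulus $\varrho(r)=r$ (so any slower modulus will do). Condition (iii) of Definition~\ref{def:Cplusalpha}, continuity in $t$ of both spatial derivatives, follows from the $L^p$-continuity of $t\mapsto (J^{t,T}_s)_{s\in\TTb}$ under the continuity standing hypothesis on $\Wh$ (equivalently, $\mathcal{K}$ continuous at zero), together with dominated convergence using the same envelopes.

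The main obstacle is the careful bookkeeping required so that the various exponentials collapse into a single envelope $\E^{\kappa\normTb{\omega}}$ with a \emph{universal} constant $\kappa$. Indeed, $\frF(\omega+J^{t,T})$ is itself exponential in $\omega$, and composing with the polynomial-growth $\phi^{(k)}$ produces a polynomial in an exponential, which is still dominated by an exponential at the price of enlarging $\kappa$; the Gaussian exponential moments of $J^{t,T}$ then absorb the random contributions cleanly. Note that the singular behaviour of $K$ near the diagonal plays no direct role at this stage, since $\eta$ and $\etab$ are arbitrary bounded directions in $\Ww_t$; the kernel singularity only becomes critical when these directions are later replaced by $K^t$ through the approximation \eqref{eq:Def_SingularDerivatives}, which is precisely where the choice $\alpha=1/2$ will match Assumption~\ref{assu:kernel}.
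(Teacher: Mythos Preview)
Your approach is correct and coincides with the paper's: the same explicit Fr\'echet derivatives are computed (the paper writes them via $V_T^{t,\omega}$ rather than $\frF(\omega+J^{t,T})$, which is equivalent), the $\delta^{\alpha}$ growth bounds arise from the compact support of $\eta,\etab$ exactly as you describe, and the regularity in $\omega$ is obtained by the mean-value theorem together with the moment estimates collected in Lemma~\ref{lemma:Estimates_fg}. One small correction: for item~(iii) the paper invokes Assumption~\ref{assu:kernel} (through estimates~\eqref{eq:vol_time_reg} and~\eqref{eq:Payoff_time_ref}) rather than the continuity of~$\Wh$ via~$\mathcal{K}$, since what is actually needed is control of $\int_{\tb}^t |K(s,r)|^2\dr$, not of $\int_0^t|K(t,r)-K(s,r)|^2\dr$.
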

    }

    \noindent 
    
    Thanks to the assumptions on~$\phi$ and $f$ and Lemma \ref{lemma:Estimates_fg}, we have that $\EE\big[\abs{\phi(V_T^{t,\bx})}^p\big]<\infty$ for all $(t,\bx)\in\Lambda$ and $p\ge1$. 
    Central to the proof is the orthogonal decomposition, for any~$t\le  t+h\le T$,
    \begin{align*}
        J^{t,T}_s = \int_t^{T} K(s,r)\D W_r 
        = \int_t^{t+h} K(s,r)\D W_r  + \int_{ t+h}^{T}K(s,r)\D W_r 
        =J^{t, t+h}_s+ J^{ t+h,T}_s.
    \end{align*}
    In light of the independence of~$J^{ t+h,T}_s$ with~$\Ff_{t+h}$, this grants us the representation of the value function
    \begin{align}\label{eq:u_Markov}
        u(t,\bx)
        = \EE\Big[\EE_{t+h}\Big[\phi\circ\frF\Big(\Big\{\bx_s + J^{t, t+h}_s+ J^{ t+h,T}_s:s\in\Dd\Big\}\Big)\Big]\Big]
        = \EE\Big[ u\big( t+h,\bx + J^{t, t+h}\big)\Big].
    \end{align}
    We wish to apply the functional Itô formula to this process, hence we reframe it for that purpose:
    \begin{align*}
        \bx_s+J^{t, t+h}_s &= \left(\bx_s +\int_t^s K(s,r)\D W_r \right)\one_{s\le t+h} + \left(\bx_s+\int_t^{t+h} K(s,r)\D W_r \right)\one_{s> t+h}\\
        &= \bm{X}_s \otimes_{t+h} \bm{\Theta}^{t+h}_s.
    \end{align*}
    We can thus rely on the Itô formula~\eqref{eq:Ito} on the interval~$[t,t+h]$, with time variable frozen, and use~$J^{t,t}_s=0$ to derive
    \begin{align*}
        u\left( t+h,\bx+J^{t, t+h}\right) &=u( t+h,\bx) + \int_t^{t+h} \Big\langle \partial_{\bx} u\left( t+h,\bx+J^{t,r}\right), K^r\Big\rangle \D W_r\\
        &\quad+ \half \int_t^{t+h} \Big\langle \partial^2_\bx u\big( t+h,\bx+J^{t,r}\big), (K^r,K^r)\Big\rangle \dr.
    \end{align*}
    Proposition~\ref{prop:SingularDirections} (which does not rely on Theorem \ref{thm:main}) shows that~$\EE\abs{\langle \partial_{\bx} u\left( t+h,\bx+J^{t,r}\right), K^r\rangle}^2<\infty$ hence the stochastic integral has zero expectation. 
    Equation~\eqref{eq:u_Markov} now paves the way for computing the temporal derivative as follows
    \begin{align}
        \partial_t u(t,\bx)
        &=\lim_{h\to0}\frac{u(t+h,\bx)-u(t,\bx)}{h} \nonumber\\
        &=- \lim_{h\to0}\frac{1}{2h}\EE\int_t^{t+h} \Big\langle \partial^2_\bx u\big( t+h,\bx+J^{t,r}\big), (K^r,K^r)\Big\rangle \dr \label{eq:limit_dtu}\\
        &=-\half \Big\langle \partial^2_\bx u( t,\bx), (K^t,K^t)\Big\rangle. \nonumber
    \end{align}
    The limit is justified thanks to the regularity of~$u$ and by decomposing the pre-limit term as follows:
    \begin{align*}
        &\EE\Big\langle \partial^2_\bx u\big( t+h,\bx+J^{t,r}\big), (K^r,K^r)\Big\rangle - \Big\langle \partial^2_\bx u\big( t,\bx\big), (K^{t},K^{t})\Big\rangle \\
        &= \EE\lim_{\delta\to0} \bigg(\Big\langle D^2_\bx u\big( t+h,\bx+J^{t,r}\big), (K^{\delta,r},K^{\delta,r})\Big\rangle - \Big\langle D^2_\bx u\big( t,\bx\big), (K^{\delta,t},K^{\delta,t})\Big\rangle\bigg)   \\ 
        &=\EE\lim_{\delta\to0} \bigg(\Big\langle D^2_\bx u\big( t+h,\bx+J^{t,r}\big), (K^{\delta,r},K^{\delta,r})\Big\rangle 
        - \Big\langle D^2_\bx u\big( t+h,\bx\big), (K^{\delta,r},K^{\delta,r})\Big\rangle \\
        &\quad+ \Big\langle D^2_\bx u\big( t+h,\bx\big), (K^{\delta,r},K^{\delta,r})\Big\rangle -\Big\langle D^2_\bx u\big( t+h,\bx\big), (K^{\delta,t},K^{\delta,t})\Big\rangle \\
        &\quad + \Big\langle D^2_\bx u\big( t+h,\bx\big), 
        (K^{\delta,t},K^{\delta,t})\Big\rangle
        - \Big\langle D^2_\bx u\big( t,\bx\big), (K^{\delta,t},K^{\delta,t})\Big\rangle\bigg) 
    \end{align*}
    From now on, the constant~$C>0$ may change from line to line. 
    The use of the estimates from Definition~\ref{def:Cplusalpha} are conditional on the directions being supported on a small interval. Viens and Zhang in the proof of \cite[Theorem 3.17]{viens2019martingale} perform a decomposition of~$\one_{[t,T]}$ into smaller intervals, leading to the convergence of~$\langle D_\bx u(t,\bx),K^{\delta,t}\rangle$ thanks to the estimates satisfied by~$u\in\Cc^{0,2}_{\half}(\Lambda)$. This technique however looks insufficient to obtain the convergence of the second derivative in the present setting.
   
    Instead, we exploit the $L^2$--estimates obtained in Lemma~\ref{lemma:DerTwo}. Along with the exponential integrability of~$J^{t,r}$ and the bound~$\abs{K(s,t)}\le C(s-t)^{H-\half}$ from Assumption~\ref{assu:kernel}, they entail
    \begin{align*}
        &\abs{\EE\Big\langle \partial^2_\bx u\big( t+h,\bx+J^{t,r}\big), (K^r,K^r)\Big\rangle - \Big\langle D^2_\bx u\big( t,\bx\big), (K^{t},K^{t})\Big\rangle }\\
        &\le C \lim_{\delta\to0} \bigg(\EE\norm{J^{t,r}}_{L^2[r,\Tb]} \norm{K^{\delta,r}}_{L^2[r,\Tb]}^2
        +\EE\norm{J^{t,r}(K^{\delta,r})^2}_{L^1[r,\Tb]}\\
        &\qquad\qquad
        + \norm{K^{\delta,r}-K^{\delta,t}}_{L^2[t,\Tb]}\norm{K^{\delta,r}+K^{\delta,t}}_{L^2[t,\Tb]}
        \\    &\qquad\qquad+h^H\norm{K^{\delta,r}}_{L^2[t,\Tb]}^2+ \norm{K^{\delta,r}}_{L^2[t,\Tb]}\norm{K^{\delta,r}}_{L^2[t,t+h]}\bigg)\\
        &\le Ch^H,
    \end{align*}
    since $\EE\abs{J^{t,r}_s}^2 \le C(r-t)^{2H}$.   
    The integrand in \eqref{eq:limit_dtu} is thus continuous and the limit is well-defined.
     Finally, even though the underlying process~$J^{t,T}$ is defined over~$\Dd\subset[0,\Tb]$, we do have $J^{T,T}\equiv0$ and thus~$u(T,\bx)=\phi\circ\frF(\bx)$, 
     thereby concluding the first part of the proof.

    Uniqueness of the solution follows from a Feynman-Kac type of argument. Assuming that~$u\in \Cc^{1,2}_\half$ satsifies the PPDE~\eqref{eq:main_PPDE} and the terminal condition, we can apply Itô's formula~\eqref{eq:Ito} and derive
    \begin{align*}
        u(T,\bx+J^{t,T}) 
        &= u(t,\bx) + \int_t^T \partial_t u(s,\bx+J^{s,T})\ds + \int_t^T \Big\langle \partial_\bx u(s,\bx+J^{s,T}),K^s \Big\rangle \D W_s\\
        &\quad+ \half\int_t^T \Big\langle \partial_\bx^2 u(s,\bx+J^{s,T}),(K^s,K^s) \Big\rangle \ds.
    \end{align*}
    Taking expectations and canceling terms thanks to the PPDE, one obtains~$u(t,\bx)=\EE[u(T,\bx+J^{t,T})]=\EE[\phi\circ\frF(\bx+J^{t,T})]$.
\end{proof}
A straightforward application of Theorem \ref{thm:main} is the martingale representation derived from the functional Itô formula.
\begin{corollary}\label{coro:MG_rpz}
    Let Assumptions \ref{assu:coefs} and \ref{assu:kernel} hold. Then for all $0\le t<s\le T$,
    \begin{align}
         u\big(s,\Th^s\big) = u\big(t,\Th^{t} \big) + \int_{t}^s \big\langle \partial_\bx u(r,\Th^r), K^r \big\rangle \D W_r.
    \end{align}
\end{corollary}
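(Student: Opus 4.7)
The plan is to apply a functional It\^o formula to the process $r\mapsto u(r,\widehat{\Theta}^{r})$ on $[t,s]$ and then use the PPDE satisfied by $u$ to kill the drift. By Theorem~\ref{thm:main}, $u\in C^{1,2}_{1/2}(\Lambda)$, so the regularity hypotheses required by the Viens--Zhang functional It\^o formula (see \cite{viens2019martingale}, in the form adapted to singular Volterra kernels in \cite[Section~2]{bonesini2023rough}) are satisfied. In particular, the pathwise derivatives $\langle\partial_\omega u(r,\omega),K^r\rangle$ and $\langle\partial_\omega^2 u(r,\omega),(K^r,K^r)\rangle$ are well-defined as the approximation limits in~\eqref{eq:Def_SingularDerivatives}, and the bounds in Definition~\ref{def:Cplusalpha} with $\alpha=1/2$ together with Assumption~\ref{assu:kernel} ensure integrability of all the terms appearing below along the Gaussian path $\widehat{\Theta}$.

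First I would invoke the functional It\^o formula for $u(r,\widehat{\Theta}^r)$ on $[t,s]$. Because $\widehat{\Theta}^r=\widehat{W}\otimes_r\Theta^r$ is the conditional expectation process with the quadratic-variation structure driven by $K$, the formula takes the form
\begin{equation*}
u(s,\widehat{\Theta}^s)=u(t,\widehat{\Theta}^t)+\int_t^s\!\Big(\partial_r u(r,\widehat{\Theta}^r)+\tfrac{1}{2}\big\langle\partial_\omega^2 u(r,\widehat{\Theta}^r),(K^r,K^r)\big\rangle\Big)\D r+\int_t^s\!\big\langle\partial_\omega u(r,\widehat{\Theta}^r),K^r\big\rangle\D W_r.
\end{equation*}
This is exactly the statement of \cite[Theorem 4.1]{viens2019martingale} (cf.\ also \cite[Proposition 2.15]{bonesini2023rough}) applied to our $u$.

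Next, since $u$ solves the PPDE~\eqref{eq:main_PPDE} of Theorem~\ref{thm:main} pointwise on $\Lambda$, the integrand of the Lebesgue (drift) integral above vanishes identically, and only the stochastic integral remains, giving the claimed representation. The only point that needs a bit of care is that the stochastic integral is genuinely a martingale rather than merely a local martingale; this follows from the bound~\eqref{eq:DerOne_bound} applied along $\omega=\widehat{\Theta}^r$ combined with the Gaussian moment bound $\EE[\exp(\kappa\|\widehat{\Theta}^r\|_{\TTb})]<\infty$ (used already in the proof of Lemma~\ref{lemma:Czerotwo}), which together with Assumption~\ref{assu:kernel} yield $\EE\int_t^s|\langle\partial_\omega u(r,\widehat{\Theta}^r),K^r\rangle|^2\D r<\infty$.

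The main obstacle, strictly speaking, is simply checking that one may invoke the functional It\^o formula with the singular direction $K^r$ rather than a $C^0$-valued direction, but this is exactly what the $C^{1,2}_{1/2}$ framework of Definition~\ref{def:Cplusalpha} (via the approximation~\eqref{eq:Def_SingularDerivatives}) and the assumption $H+\alpha>1/2$ are engineered to handle, so the verification is routine once Theorem~\ref{thm:main} is in hand.
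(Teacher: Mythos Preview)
Your proof is correct and follows essentially the same approach as the paper: apply the functional It\^o formula of \cite{viens2019martingale} to $u(r,\widehat{\Theta}^r)$ and use the PPDE~\eqref{eq:main_PPDE} to cancel the finite-variation terms. The paper's own proof is a two-line reference to these ingredients, so your version is simply a more detailed write-up of the same argument (with the welcome addition of verifying that the stochastic integral is a true martingale).
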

\begin{proof}
    This follows from the functional Itô formula~\eqref{eq:Ito} and the PPDE \eqref{eq:main_PPDE} which cancels the finite variation terms.
\end{proof}
The uniqueness of the martingale representation entails that it is equivalent to the Clark-Ocone formula~\cite[Proposition 1.3.14]{nualart2006malliavin}, in particular we have $ \big\langle \partial_\bx u(s,\Th^s), K^s \big\rangle= {\rm D}_s u(s,\Th^s)$ where ${\rm D}$ denotes the Malliavin derivative. From a financial viewpoint, the martingale representation naturally lends itself to hedging formulae, see \cite[Proposition 2.2]{fukasawa2021hedging}. It also shows that a market where the two following assets are traded is complete: the variance derivative with payoff $\phi(V_T)$ and an asset $S$ with dynamics~$\D S_t/S_t = \sigma_s \D B_s$, with $B$ a Brownian motion correlated to $W$. In practical terms, they would correspond to the S\&P 500 and an option on the VIX or on the realised variance.

For completeness, we provide a more explicit expression of the pathwise derivatives at play in this section. The proof can be found in Section \ref{sec:SingularDirections}. For any $h\in C^2(\RR^d,\RR)$, let $\nabla h$ and $\nabla^2 h$ be the gradient and Hessian matrix respectively. For any $y,z\in\RR^{d\times m}$,  we also define for clarity and coherence
\begin{align*}
    \big\langle \nabla h(x), y \big\rangle 
    := \nabla h(x)^\top y, \qquad \text{and} \qquad 
    \big\langle \nabla^2 h(x), (y,z) \big\rangle 
    := {\rm Tr}\left(z^\top \nabla^2 h(x) y\right). 
\end{align*}

\begin{proposition}\label{prop:SingularDirections}
Let Assumptions \ref{assu:coefs} and \ref{assu:kernel} hold.
Then for all $(t,\bx)\in\Lambda$, we have
\begin{align}
    \big\langle \partial_\bx u(t,\bx) ,K^t \big\rangle 
    &=\EE\left[\phi'(V^{t,\bx}_T) \int_{t}^{\Tb} \Big\langle \nabla f_s(\Wh_s^{t,\bx}), K(s,t) \Big\rangle \ds\right], \label{eq:DerOne_Singular}\\
    \big\langle \partial_\bx^2 u(t,\bx) ,(K^t,K^t)\big\rangle
    &=\EE\left[\phi''(V^{t,\bx}_T) \left(\int_{t}^{\Tb}  \Big\langle \nabla f_s(\Wh_s^{t,\bx}), K(s,t) \Big\rangle\ds \right)^2\right]  \label{eq:DerTwo_Singular} \\
    &\quad +\EE\left[ \phi'(V^{t,\bx}_T) \int_{t}^{\Tb} \Big\langle \nabla^2 f_s(\Wh_s^{t,\bx}), \big( K(s,t),K(s,t)\big)\Big\rangle\ds 
    \right].\nonumber
\end{align}    
\end{proposition}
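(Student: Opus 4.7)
The plan is a two-step approach: first compute the Fréchet derivatives $\langle\partial_\omega u(t,\omega),\eta\rangle$ and $\langle\partial^2_\omega u(t,\omega),(\eta,\eta)\rangle$ against a smooth direction $\eta\in\Ww_t$ by direct differentiation of $u(t,\omega)=\EE[\phi(V_T^{t,\omega})]$, and then pass to the singular direction by substituting $\eta=K^{\delta,t}$ and letting $\delta\downarrow 0$ per definition~\eqref{eq:Def_SingularDerivatives}. Since $\Wh^{t,\omega+\ep\eta}_s=\Wh^{t,\omega}_s+\ep\eta_s$ with $\eta_s=0$ on $[0,t)$, the chain rule inside $\EE_T$ gives
\begin{align*}
\partial_\ep V_T^{t,\omega+\ep\eta}\big|_{\ep=0}
&=\EE_T\int_t^{\Tb}\langle\nabla f_s(\Wh^{t,\omega}_s),\eta_s\rangle\ds, \\
\partial^2_\ep V_T^{t,\omega+\ep\eta}\big|_{\ep=0}
&=\EE_T\int_t^{\Tb}\langle\Delta f_s(\Wh^{t,\omega}_s),(\eta_s,\eta_s)\rangle\ds.
\end{align*}
A second chain rule applied to $\phi(V_T^{t,\omega+\ep\eta})$ together with interchange of $\partial_\ep^k$ and the outer $\EE$ then yields Fréchet expressions matching \eqref{eq:DerOne_Singular}--\eqref{eq:DerTwo_Singular} but with $\eta$ in place of $K^t$; the tower property (using that $\phi'(V_T^{t,\omega})$ and $\phi''(V_T^{t,\omega})$ are $\Ff_T$-measurable) absorbs the remaining $\EE_T$-factors to produce the stated form.

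To recover the singular direction, I substitute $\eta=K^{\delta,t}$ and let $\delta\downarrow 0$. Pointwise $K^{\delta,t}(s)\to K(s,t)$ on $(t,\Tb]$, and Assumption~\ref{assu:kernel} supplies the envelope $|K(s,t)|\le C(s-t)^{H-1/2}$, which is Lebesgue-integrable (and square-integrable, since $\norm{K}_2<\infty$) on $[t,\Tb]$. Combined with the polynomial growth of $\phi^{(k)}$ and the exponential growth of $\nabla f$ and $\Delta f$ from Assumption~\ref{assu:coefs}, and with Fernique-type exponential moment bounds on $\Wh^{t,\omega}$, this furnishes a uniform dominating function so that dominated convergence pushes the $\delta\downarrow 0$ limit inside the outer expectation, producing \eqref{eq:DerOne_Singular}--\eqref{eq:DerTwo_Singular} via the definition \eqref{eq:Def_SingularDerivatives}.

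The main obstacle I anticipate is the rigorous book-keeping of the nested interchanges: $\partial_\ep$ with the outer $\EE$, with $\EE_T$, and with the time integral, and finally the $\delta\downarrow 0$ limit. Each requires uniform $L^1$ envelopes on the relevant difference quotients, obtained by pairing Assumption~\ref{assu:coefs} with Fernique moment bounds on $\Wh$; these estimates are essentially the content of Lemma~\ref{lemma:Estimates_fg} that already underpins the proof of Theorem~\ref{thm:main}, and they can be reused here with at most cosmetic changes.
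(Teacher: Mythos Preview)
Your proposal is correct and follows the same two-step architecture as the paper: first establish the Fr\'echet derivatives against a bounded direction $\eta\in\Ww_t$ (this is exactly the content of Lemmas~\ref{lemma:DerOne} and~\ref{lemma:DerTwo}, including your tower-property remark to absorb $\EE_T$), then pass to $\eta=K^{\delta,t}$ and send $\delta\downarrow 0$.

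The difference lies in how the $\delta\downarrow 0$ limit is handled. You invoke dominated convergence with the $\delta$-uniform envelope $\abs{K^{\delta}(s,t)}\le C(s-t)^{H-1/2}$ (this bound does require a one-line check for $s\in(t,t+\delta]$, where $\abs{K^\delta(s,t)}=\abs{K(t+\delta,t)}\le C\delta^{H-1/2}\le C(s-t)^{H-1/2}$ when $H\le\tfrac12$), together with the moment bounds~\eqref{eq:Vol_bound}--\eqref{eq:Payoff_bound}. The paper instead derives an explicit quantitative estimate on $\abs{K^\delta(s,t)-K(s,t)}$ via Taylor's theorem and the \emph{second} bound $\abs{\partial_s K(s,t)}\le C(s-t)^{H-3/2}$ of Assumption~\ref{assu:kernel}, obtaining $\int_t^{\Tb}\abs{K^\delta(s,t)-K(s,t)}^p\ds\lesssim \delta^{p+H/(1-H)}$ for suitable $p$, and then controls each of four cross-terms by H\"older's inequality with the particular exponent $p=\tfrac{2}{1-H}$. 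Your route is more economical---it only uses the first inequality in Assumption~\ref{assu:kernel}---whereas the paper's route yields an explicit rate of convergence in $\delta$, which is not needed for the statement but is consistent with the quantitative spirit of Section~\ref{sec:proof}.
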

\begin{remark}
   Again, a parallel can be drawn with Malliavin calculus as we have
    \begin{align*}
         \big\langle \partial_\bx u(t,\bx) ,K^t \big\rangle 
    =\EE\left[{\rm D}_t\phi(V^{t,\bx}_T)\right],
    \qquad \text{and}\qquad 
    \big\langle \partial_\bx^2 u(t,\bx) ,(K^t,K^t) \big\rangle 
    =\EE\left[{\rm D}_t^2 \phi(V^{t,\bx}_T)\right],
    \end{align*}
    where ${\rm D}$ and ${\rm D}^2$ are respectively the first and second Malliavin derivatives.
\end{remark}

\section{The implied volatility counterpart}\label{sec:IV}
Inspired by \cite{berestycki2004computing}, we derive a path-dependent PDE for the implied volatility of the volatility derivatives we considered so far. In this paper, Berestycki, Busca and Florent considered the implied volatility in a (Markovian) stochastic volatility model, which yields a pricing PDE on~$\RR_+\times\RR^n$. Even in this simpler setting, well-posedness was a highly challenging task therefore we do not intend to pursue such a goal here, since the solution theory of our PPDEs remains limited. 

Let $\BS(t,S,\sigma;T,\kappa)$ be the Black-Scholes price at time~$t\ge0$ of a Call option with spot~$S>0$, volatility~$\sigma>0$, maturity~$T>0$, strike~$\kappa>0$ and with zero interest rates.
The Black-Scholes formula gives
$$
\BS(t,S,\sigma;T,\kappa) = S N\left(\frac{\log(S/\kappa)}{\sigma\sqrt{T-t}}+\half\sigma\sqrt{T-t}\right)- \kappa N\left(\frac{\log(S/\kappa)}{\sigma\sqrt{T-t}}-\half\sigma\sqrt{T-t}\right).
$$
Let~$\varphi(t,\bx)$ be the price of the traded future at time~$t\in[0,T]$: the VIX future is~$\varphi(t,\bx)=\EE\Big[\sqrt{V^{t,\bx}_T/\Delta}\Big]$, for~$\Delta=30$ days, while the variance swap gives~$\varphi(t,\bx)=\EE[V^{t,\bx}_T/T]$ (for different~$V_T^{t,\bx}$, see Example~\ref{ex:derivatives}). Note that if $V$ were a traded asset (e.g. a stock price) then it would be a martingale and~$\EE[V_T^{t,\bx}]=V_t^{t,\bx}=\bx_t$. We assume, as it is the case for both the VIX future and the variance swap, that the payoff belongs to~$C^3_{{\rm poly}}(\RR)$ such that $\varphi$ is a particular case of Theorem~\ref{thm:main} and solves the path-dependent PDE
\begin{align*}
\partial_t \varphi(t,\bx) + \half\langle \partial^2_\bx \varphi(t,\bx),(K^t,K^t)\rangle =0 ,\qquad \text{for all   }(t,\bx)\in\Lambda.
\end{align*}
Let us define the reduced variable~$\psi(t,\bx) := \log\Big(\varphi(t,\bx)/\kappa\Big)$,
which hence solves for all~$(t,\bx)\in\Lambda$
\begin{align}\label{eq:PDE_reduced_future}
    \partial_t \psi(t,\bx) + \half\Big( \langle \partial^2_\bx \psi(t,\bx),(K^t,K^t)\rangle +\abs{ \langle \partial_\bx \psi(t,\bx),K^t\rangle}^2 \Big) =0 ,
\end{align}
where~$\abs{\cdot}$ denotes the Euclidean norm in~$\RR^m$.
We notice that the Call price can be written as
\begin{equation}\label{eq:CallPrice}
u(t,\Th^t) = \EE\big[\big(\varphi(T,\Th^T)-\kappa\big)_+ |\Ff_t\big] \quad \text{and}\quad u(t,\bx) = \EE\big[\big(\varphi(T,\bx+J^{t,T})-\kappa\big)_+\big],
\end{equation}
since~$\Th^T-\Th^t=J^{t,T}_s$ is independent of~$\Ff_t$, see Equation~\eqref{eq:def_JtT} and below.
We do not know at this point how to exploit the regularising property of the expectation to relax the assumption~$\phi\in C^3$.
Because of the non-differentiability of the function~$x\mapsto (x)_+$, this map~$u$ is not covered by Theorem~\ref{thm:main}, hence we will make this a standing assumption, from now on and in Theorem~\ref{th:IV}. The future price~$\phi(t,\Th^t)$ is a true martingale, by integrability of~$V_T$, thus the implied volatility of the variance derivative~$V_T$ is defined as the unique non-negative solution~$\Sigma=\Sigma(t,\bx;T,\kappa)$ to
\begin{equation*}\label{eq:def_IV1}
    u(t,\bx) =\BS(t,\varphi(t,\bx),\Sigma;T,\kappa).
\end{equation*}
The main difference with~\cite{berestycki2004computing} is that the future price also depends on $(t,\bx)$ which brings more intricate dependencies. Although some asymptotic results for implied volatility of VIX exist, the main result of this section is to the best of our knowledge the first theoretical result that holds at any time~$t\ge0$ and without involving~$\BS^{-1}$.
\begin{theorem}\label{th:IV}
We assume that the functional~$u:\Lambda\to\RR$ defined in~\eqref{eq:CallPrice} belongs to~$\Cc^{1,2}_{\alpha}(\Lambda)$ for some~$\alpha\in(0,1)$.
For all~$(t,\bx)\in\Lambda$, the total implied variance~$\wS\equiv\wS(t,\bx):= (T-t) \Sigma(t,\bx)^2$ is a~$\Cc^{1,2}(\Lambda)$ solution of the PPDE
\begin{align}\label{eq:IV_PPDE}
    \partial_t\wS + \frac12 \left(\big(\partial_\bx\wS\big)^\top \partial_\bx\psi 
     + \partial^2_\bx \wS  \right) -\left(\frac{1}{16}+\frac{1}{4\wS}\right)
    \big\lvert\partial_\bx\wS\big\lvert^2 + \Big\lvert \partial_{\bx}\psi - \frac{\psi}{2\wS}\partial_\bx\wS\Big\lvert^2 =0,
\end{align}
where~$\abs{\cdot}$ represents the Euclidean norm in~$\RR^m$ and for all~$\mathfrak{u}:\Lambda\to\RR$ we denote~$\partial_\bx \mathfrak{u} = \langle \partial_\bx \mathfrak{u}(t,\bx), K^t \rangle$ and~$\partial_\bx^2 \mathfrak{u} = \langle \partial_\bx^2 \mathfrak{u}(t,\bx), (K^t,K^t) \rangle$.
\end{theorem}
\begin{remark}
Letting~$\tau=T-t$, this is equivalent to the PPDE for the implied volatility
\begin{align*}
    \partial_\tau (\tau\Sigma^2)
    &= \tau\Sigma \,\big(\partial_\bx\Sigma\big)^\top \partial_\bx\psi + 2 \tau\big( \Sigma \partial_\bx^2 \Sigma+\abs{\partial_\bx \Sigma}^2 \big) 
    -  \left(\frac{\tau^2\Sigma^2}{4}+\tau \right) \abs{\partial_\bx \Sigma}^2 
    +\abs{ \partial_{\bx}\psi - \frac{\psi}{\Sigma} \partial_\bx\Sigma}^2\\
    \text{or} \quad \partial_\tau (\tau\Sigma^2)&= \big(\partial_\bx \Sigma\big)^\top\partial_\bx\psi \left(\tau \Sigma  -2\frac{\psi}{\Sigma}\right) + \abs{\partial_\bx \Sigma}^2\left( \frac{\tau^2\Sigma^2}{4} + \tau +\frac{\psi^2}{\Sigma^2} \right) + 2\tau\Sigma \,\partial^2_\bx \Sigma + \abs{\partial_\bx \psi}^2.
\end{align*}
Setting $t=T$, i.e. $\tau=0$, we are left with
$\displaystyle
\Sigma^2(T,\bx) = \Big\lvert\partial_{\bx}\psi - \frac{\psi}{\Sigma}\partial_\bx\Sigma\Big\lvert^2 (T,\bx).
$
\notthis{
An attempt at simplifying this equation leads us to define $\Psi:=\psi/\Sigma$ which solves
$$
\Psi^2(T,\bx) = \psi^2\abs{\partial_\bx \psi - \Psi \frac{\psi\partial_\bx \Psi + \Psi \partial_\bx \psi}{\psi^2} }^2 (T,\bx).
$$    
}
Clearly both PPDEs, for~$t\in[0,T)$ and $t=T$,  are well-posed in the sense that~$\Sigma$ is a classical solution. As mentioned above, whether this system of PDEs has a unique solution was already a hard problem in the Markovian case and is out of scope of this article.
\end{remark}
\begin{proof}
The proof follows~\cite[Section 3.1]{berestycki2004computing}, only with more details.
Let us introduce the reduced variables~$\tau=T-t$ and $x=\log(S/\kappa)$ and define the unit volatility Black-Scholes price
$$
v(\tau,x) = \E^x N\left(\frac{x}{\sqrt{\tau}}+\half\sqrt{\tau}\right)-N\left(\frac{x}{\sqrt{\tau}}-\half\sqrt{\tau}\right).
$$
We observe that
$
\BS(t,S,\sigma;T,\kappa)= \kappa v (\sigma^2(T-t), \log(S/\kappa))
$
and $v$ solves the initial value problem
\begin{equation}\label{eq:PDE_reduced}
\def\arraystretch{1.5}
    \left\{\hspace{-.4cm}
\begin{array}{rl}
    &\partial_\tau v(\tau,x) = \half (\partial_x^2-\partial_x) v(\tau,x), \qquad \tau>0, \,x \in \RR, \\
    &v(0,x) = (\E^x-1)_+.
\end{array}
\right.
\end{equation}
The implied volatility can be defined via this reduced approach to
\begin{equation}\label{eq:def_IV}
    u(t,\bx) =\BS(t,\varphi(t,\bx),\Sigma;T,\kappa)=\kappa v\big(\Sigma^2(T-t),\psi(t,\bx) \big).
\end{equation}
We write for conciseness $u=u(t,\bx),\, v=v(\Sigma^2(T-t),\psi(t,\bx)),\,\psi=\psi(t,\bx)$ and the total implied variance~$\widehat{\Sigma}=\Sigma(t,\bx;T,\kappa)^2(T-t)$. 
\notthis{
The implied volatility is differentiable using the chain rule since~$\partial_\bx u$ exists and~$\partial_\sigma \BS>0$ . It is given by
$$
\partial_\bx \Sigma = \frac{\partial_\bx \BS(t,\varphi,\Sigma) - \partial_x \BS(t,\varphi,\Sigma) \partial_\bx \varphi}{\partial_\sigma \BS(t,\varphi,\Sigma)}= \frac{\partial_\bx u - \partial_x \BS(t,\varphi,\Sigma) \partial_\bx \varphi}{\partial_\sigma \BS(t,\varphi,\Sigma)},
$$
where all the derivatives on the right-hand-side are well defined. The same conclusion also holds for~$\partial_t$ and~$\partial^2_\bx$.}
Through the informal chain rule formula applied to~$u$, we can define the derivatives~$\partial \wS$ for~$\partial\in\{\partial_t, \partial_\bx,\partial^2_\bx\}$. Indeed, all the derivatives of~$u,v$ and~$\varphi$ are already well-defined and~$\partial_\tau v>0$, hence the derivatives of~$\wS$ are uniquely determined by:
\begin{align*}
    \frac{\partial_t u}{\kappa}& = \partial_\tau v \, \partial_t\wS + \partial_x v \,\partial_t\psi;\qquad 
    \frac{\partial_\bx u}{\kappa} = \partial_\tau v \, \partial_\bx\wS + \partial_x v \, \partial_\bx \psi; \\
    \frac{\partial^2_\bx u}{\kappa} &= 2\partial_{\tau x} v\, (\partial_\bx\wS)^\top \,\partial_\bx \psi + \partial_\tau v \,\partial^2_\bx \wS + \partial^2_{\tau}v \,\big\lvert\partial_\bx\wS\big\lvert^2 + \partial^2_x v \,\abs{\partial_\bx \psi}^2 + \partial_x v \,\partial^2_\bx \psi,
\end{align*}
where we recall that~$\partial_\bx u,\partial_\bx \psi$ and~$\partial_\bx \wS$ are~$\RR^m$-valued.
It turns out these are also the derivatives of interest from~\eqref{eq:def_IV}, yielding the expression
\begin{align}\label{eq:Computation_v}
    \left(\partial_t + \half \partial_\bx^2 \right) u &= \kappa \partial_\tau v \bigg\{ 
    \partial_t\wS 
    +\frac{\partial_x v}{\partial_\tau v}   \left( \partial_t\psi + \half \partial^2_\bx \psi\right)
    + \frac{\partial_{\tau x} v}{\partial_\tau v}  \big(\partial_\bx\wS\big)^\top \partial_\bx\psi \\
    &\qquad \qquad 
    + \half\partial^2_\bx \wS  
    + \half \frac{\partial^2_{\tau}v}{\partial_\tau v} \abs{\partial_\bx\wS}^2
    +\half \frac{ \partial^2_x v}{\partial_\tau v}\abs{\partial_\bx\psi}^2
    \bigg\}. \nonumber
\end{align}
We use the relation~$\frac{ \partial^2_x v}{\partial_\tau v} =  2+\frac{ \partial_x v}{\partial_\tau v}$ from~\eqref{eq:PDE_reduced} and the PPDE~\eqref{eq:PDE_reduced_future} to get
$$
\frac{\partial_x v}{\partial_\tau v}   \left( \partial_t\psi + \half \partial^2_\bx \psi\right) + \half \frac{ \partial^2_x v}{\partial_\tau v}\abs{\partial_{\bx}\psi}^2
= \frac{\partial_x v}{\partial_\tau v} \left(\partial_t\psi + \half \partial^2_\bx \psi + \half\abs{\partial_{\bx}\psi}^2\right)+\abs{\partial_{\bx}\psi}^2=\abs{\partial_{\bx}\psi}^2.
$$
Furthermore, we plug the classical relations of the Black-Scholes greeks
\begin{align*}
    \frac{\partial_{\tau x} v}{\partial_\tau v}(\tau,x) =  \half-\frac{x}{\tau}\qquad\text{and}\qquad 
    \frac{\partial_{\tau}^2 v}{\partial_\tau v}(\tau,x) = -\frac{1}{2 \tau}  + \frac{x^2}{2\tau^2} - \frac18
\end{align*}
in~\eqref{eq:Computation_v} to obtain
\begin{align*}
    \left(\partial_t + \half \partial_\bx^2 \right) u = \kappa \partial_\tau v \bigg\{ &\partial_t\wS +\abs{\partial_{\bx}\psi}^2+ \left(\frac12-\frac{\psi}{\wS}\right)\big(\partial_\bx\wS\big)^\top \,\partial_\bx\psi 
     + \half\partial^2_\bx \wS  \\
    &+ \half \left(-\frac{1}{2\wS}+\frac{\psi^2}{2\wS^2}-\frac18\right)
    \abs{\partial_\bx\wS}^2 
    \bigg\}.
\end{align*}
This simplifies further when observing that
$$
\abs{\partial_{\bx}\psi}^2- \frac{\psi}{\wS} \big(\partial_\bx\wS\big)^\top \,\partial_\bx\psi +\frac{\psi^2}{4\wS^2} \abs{\partial_\bx\wS}^2 = \abs{\partial_{\bx}\psi - \frac{\psi}{2\wS}\partial_\bx\wS}^2.
$$
Since the left-hand-side~$\left(\partial_t + \half \partial_\bx^2 \right) u$ is equal to zero we reach the PPDE for the total implied variance
\begin{align*}
    \partial_t\wS + \frac12 \left(\big(\partial_\bx\wS\big)^\top \partial_\bx\psi 
     + \partial^2_\bx \wS  \right) -\left(\frac{1}{16}+\frac{1}{4\wS}\right)
    \abs{\partial_\bx\wS}^2 + \abs{ \partial_{\bx}\psi - \frac{\psi}{2\wS}\partial_\bx\wS}^2 =0,
\end{align*}
which is precisely the claim.
\end{proof}


\section{Path but no past in the Markovian case}
\label{sec:Markov}
This subsection explores the particular case of VIX options, technically speaking the case $\Dd=[T,\Tb]$, which simplifies considerably in Markovian models. The latter corresponds to the first bullet point in Example~\ref{ex:kernels} with the exponential kernel~$K(t,r)=\E^{-\beta(t-r)} \nu$ where~$\nu\in\RR^{d\times m}$, $\beta\in \RR_+^{d\times d}$ and the matrix exponential is understood componentwise, that is~$\E^\beta:=(\E^{\beta_{ij}})_{i,j=1}^d$. Hence, setting~$\gamma_t=\E^{-\beta t}\gamma_0$ for all~$t\in[0,\Tb]$ with~$\gamma_0\in\RR^d$, $\Wh$ is an Ornstein-Uhlenbeck process solution to the SDE~$\D X_t = -\beta X_t \dt + \nu \,\D W_t$ and~$X_0=\gamma_0\in\RR^d$. Under the assumption that the payoff does not look into~$[0,T)$, the option price at time~$t\in[0,T]$ is only a function of~$t$ and~$\Wh_t$ (Proposition~\ref{prop:Markov_rpz}), therefore no path-dependent lift is necessary and this price is recovered as the solution of a finite-dimensional PDE (Corollary~\ref{coro:Markov_PDE}). Although similar in spirit to~\cite{barletta2019short,fouque2018heston,lin2009vix,papanicolaou2014regime} for the Heston model, the present case involves an additional non-linearity~$f$ which prevents from computing~$V_T$ explicitly.
After the theoretical results, numerical illustrations complete the picture.
\begin{proposition}\label{prop:Markov_rpz}
    Let~$\phi:\RR\to\RR$ and~$f:\Dd\times\RR\to\RR$ be measurable functions and~$\Dd=[T,\Tb]$. There exists a measurable map~$\tilde{u}:[0,T]\times\RR^d \to\RR$ such that
    $$
\EE_t[\phi(V_T)] = \tilde{u}(t, \Wh_t), \qquad \text{for all   } t\in[0,T].
    $$
\end{proposition}
\begin{proof}
    We simply have to notice that for all~$0\le t\le T\le s \le \Tb$, $\Th^t_s = \E^{-\beta s}\gamma_0+\int_0^t \E^{-\beta(s-r)} \nu \D W_r = \E^{-\beta(s-t)} \Wh_t$. Then, Equation~\eqref{eq:uMarkov} reads
    $$
\EE_t\big[\phi(V_t)\big] = \EE_t\Big[\phi\circ\frF\Big(\Big\{ \E^{-\beta(s-t)} \Wh_t + J^{t,T}_s: s\in\Dd \Big\}\Big)\Big],
    $$
    where~$J^{t,T}_s$ is independent of~$\Ff_T$. Therefore there exists a map~$\tilde{u}:[0,T]\times\RR^d\to\RR$ such that~$\EE_t[\phi(V_T)] = \tilde{u}(t,\Wh_t)$ almost surely. This map is defined for all~$t\in[0,T]$ and $w\in\RR^d$ as
    \begin{equation}\label{eq:Def_u_Markov}
        \tilde{u}(t,w) := \EE\Big[\phi\circ\frF\Big(\big\{ \E^{-\beta(s-t)} w + J^{t,T}_s: s\in\Dd \big\}\Big)\Big],
    \end{equation}
    which concludes the proof.
\end{proof}
Classical results of Feynman-Kac type asserts that this pricing function satisfies a finite-dimensional backward Kolmogorov PDE starring the generator of the OU process. However, we can also retrieve it from the PPDE, thereby exploiting the Markov property of the OU process. First compare~\eqref{eq:Def_u} and~\eqref{eq:Def_u_Markov} and notice that, for all~$(t,w)\in[0,T]\times\RR^d$,
    \begin{equation}\label{eq:Eq_u_baru}
    \tilde{u}(t,w) = \EE\Big[\phi\circ\frF\Big(\big\{ \E^{-\beta(s-t)} w + J^{t,T}_s: s\in\Dd \big\}\Big)\Big]
    = u(t, \E^{-\beta(\cdot-t)}w).
    \end{equation}
    From the terminal condition of the PPDE~\eqref{eq:main_PPDE} we immediately recover the terminal condition of the PDE~\eqref{eq:Markov_PDE}.
    Let~$(\bt,\bw)\in[0,T]\times\RR^d$ and~$z\in\RR^{d\times m}$. Notice that~$K^{\bt} = \E^{-\beta(\cdot-\bt)} \nu$ is continuous. Hence there is no need for a truncation argument, $\langle D_{\bx} u(t,\bx),K^t\rangle$ is well-defined as a Fréchet derivative and is equal to~$\langle \partial_{\bx} u(t,\bx),K^t\rangle$. Let us look carefully at the derivatives: 
    \begin{align}\label{eq:Relation_Derivatives_Markov}
    \big\langle \nabla \tilde{u}\big(\bt,\bw \big),  z\big\rangle 
    &= \big\langle \partial_{\bw} u\big(\bt,\E^{-\beta(\cdot-\bt)} \bw \big),  z\big\rangle 
    = \big\langle \partial_\bx u\big(\bt,\E^{-\beta(\cdot-\bt)} \bw \big),   \E^{-\beta(\cdot-\bt)}z\big\rangle\\
    \big\langle \nabla^2 \tilde{u}(\bt, \bw ), (\nu,\nu) \big\rangle
    &=\big\langle\partial^2_\bx u\big(\bt,\E^{-\beta(\cdot-\bt)} \bw \big), (K^{\bt},K^{\bt}) \big\rangle, \nonumber\\
    \partial_{\bt} \tilde{u}(\bt,\bw) 
    &= \partial_{\bt} u\big(\bt,\E^{-\beta(\cdot-\bt)} \bw \big)
    = \partial_t u\big(\bt,\E^{-\beta(\cdot-\bt)} \bw \big) + \big\langle \partial_\bx u\big(\bt,\E^{-\beta(\cdot-\bt)} \bw \big), \beta\E^{-\beta(\cdot-\bt)} \bw \big\rangle \nonumber \\
    & =  \partial_t u\big(\bt,\E^{-\beta(\cdot-\bt)} \bw \big) +\big\langle \nabla \tilde{u}\big(\bt,\bw \big),  \beta\bw\big\rangle \nonumber
    \end{align}
    The PPDE~\eqref{eq:main_PPDE} holds for any~$\bx\in C^0 ([0,\Tb], \mathbb{R}^d)$, in particular~$\bx = \E^{\beta(\cdot-\bt)} \bw$ for any~$\bw\in\RR^d$. Therefore we obtain
    \begin{align*}
        &\partial_{\bt} \tilde{u}(\bt,\bw) - \big\langle \nabla \tilde{u}\big(\bt,\bw \big),  \beta\bw\big\rangle + \half\big\langle \nabla^2 \tilde{u}(\bt, \bw ), (\nu,\nu) \big\rangle\\
        &\qquad=\partial_t u\big(\bt,\E^{-\beta(\cdot-\bt)} \bw \big) + \half \big\langle\partial^2_\bx u\big(\bt,\E^{-\beta(\cdot-\bt)} \bw \big), (K^{\bt},K^{\bt}) \big\rangle = 0.
    \end{align*}    
    Combined with the terminal condition, this entails both existence and uniqueness of the PDE.
\begin{corollary}\label{coro:Markov_PDE}
    Let Assumption~\ref{assu:coefs} hold. The value function~$\tilde{u}$ defined in~\eqref{eq:Def_u_Markov} is the unique~$C^{1,2}_{{\rm exp}+}([0,T]\times\RR^d)$ solution to the partial differential equation, for all~$(t,w)\in[0,T]\times\RR^d$,
    \begin{equation}\label{eq:Markov_PDE}
\def\arraystretch{1.5}
    \left\{\hspace{-.4cm}
\begin{array}{rl}
    &\displaystyle \partial_t \tilde{u}(t,w)- \big\langle \nabla \tilde{u}\big(t,w \big),  \beta w\big\rangle  + \half \big\langle \nabla^2 \tilde{u}(t,w), (\nu,\nu) \big\rangle = 0,\\
    &\displaystyle  \tilde{u}(T,w) = \phi\circ\frF\big(\big\{ \E^{-\beta(s-T)} w : s\in\Dd\big\}\big),
\end{array}
\right.
\end{equation}
where~$\frF$ is defined in~\eqref{eq:Def_frF_bis}.
\end{corollary}
\begin{remark}
   Equation~\eqref{eq:Eq_u_baru} combined with Corollary \ref{coro:MG_rpz} and Proposition \ref{prop:SingularDirections} yield formulae for the greeks.
\end{remark}
\begin{remark}
    The results of this section still hold if one replaces~$\Wh$ with a different semimartingale, however the Gaussian case has the advantage that~$\frF$ is given in integral form because~$I^t_s$ is Gaussian. For non-Gaussian processes, this function and hence the terminal condition are in general computable only by Monte Carlo.
\end{remark}

\subsection{Numerical example}
Let us specify a model under which we will illustrate the option prices given by the PDE method.
Let $d=m=2$, for all~$\nu_1,\nu_2,\nu_3\in\RR$, $\beta_1,\beta_2,\beta_3>0$ and~$\rho\in[-1,1],\,\bar{\rho}=\sqrt{1-\rho^2}$, we define the triangular matrices~$\nu\in\RR^{2\times 2}$ and~$\beta\in\RR^{2\times 2}_+$:
\begin{align*}
\nu = \begin{bmatrix}
       \bar{\rho} \nu_1 & \rho\nu_2 \\ 0 & \nu_3
    \end{bmatrix}  \quad \text{and} \quad
    \beta =  \begin{bmatrix} \beta_1 & \beta_2 \\
    0 & \beta_3 \end{bmatrix},\quad \text{such that} \quad
    \E^\beta = \begin{bmatrix} \E^{\beta_1} & \E^{\beta_2} \\ 0 & \E^{\beta_3}
    \end{bmatrix}.    
\end{align*}
This yields
\begin{align*}
   K(t) =  \begin{bmatrix}
         \bar{\rho}\nu_1 \E^{- \beta_1 t} &\rho\nu_2\E^{-\beta_2 t} \\
        0 & \nu_3\E^{-\beta_3 t}
    \end{bmatrix}
    \quad \text{and thus}\quad
    \Wh_t = \begin{bmatrix}
        \int_0^t \nu_1 \E^{-\beta_1 (t-r)}\bar{\rho} \D W^1_r + \int_0^t \nu_2 \E^{-\beta_2(t-r)} \rho \D W^2_r  \\
        \int_0^t \nu_3 \E^{-\beta_3 (t-r)} \D W^2_r
    \end{bmatrix}.
\end{align*}
In the original Bergomi models~\cite{bergomi2008smile2,bergomi2008smile3}, the volatility is a function of~$\Wh^1$ albeit with~$\beta_1\neq\beta_2$, while more modern versions such as in~\cite{romer2022empirical} consider a weighted sum of nonlinear functionals of~$\Wh^1$ and~$\Wh^2$, with~$\beta_1=\beta_2$ and~$\nu_1=\nu_2$. In this section we will consider the latter. Then following Example~\ref{ex:volfct} we define 
$$
f_t(w) = \zeta(t) \left(\lambda \exp\left(w_1-\frac{\nu_1^2(1-\E^{-2\beta_1 t})}{4\beta_1}\right) + (1-\lambda) \exp\left(w_2-\frac{\nu_3^2(1-\E^{-2\beta_3 t})}{4\beta_3}\right) \right), 
$$
where we set~$\zeta\equiv\zeta(0)>0$ for simplicity and~$\lambda\in(0,1)$. 
Note that the correlation~$\rho$ is not visible here, but acts in the PDE through the matrix~$\nu$. If we had picked~$\beta_1\neq\beta_2$ then~$\rho$ would also be present in~$f$. We are concerned with VIX options hence we set~$\Dd=[T,T+\Delta]$ where~$\Delta$ is the 30 days window. 
For pricing a Call with strike~$K>0$ we set~$\phi(x)=(\sqrt{x/\Delta}-K)_+$ and recall that~$\frF(\{\bx_s:s\in\Dd\})=\int_\Dd \EE[f_s(\bx_s+I_s^T)] \ds$. 

For computing the terminal condition, we observe that~$I^T_s=(\EE_T[\Wh^1_s], \, \EE_T[\Wh^2_s])^\top$ and let~$\bx=\E^{-\beta(\cdot-T)} w$ such that we have 
\begin{align*}
&\EE[f_s(\E^{-\beta(s-T)} w + I^T_s)] \\
&= \zeta \bigg\{\lambda \exp\left(\E^{-\beta_1(s-T)}w_1 -\frac{\nu_1^2(1-\E^{-2\beta_1 s})}{4\beta_1}\right)\EE\E^{\EE_T[\Wh^1_s]} 
+ (1-\lambda) \exp\left(\E^{-\beta_3(s-T)}w_2 -\frac{\nu_3^2(1-\E^{-2\beta_3 s})}{4\beta_3}\right)\EE\E^{\EE_T[\Wh^2_s]} \bigg\} \\
&= \zeta \bigg\{\lambda \exp\left(\E^{-\beta_1(s-T)}w_1 +\frac{\nu_1^2\E^{-2\beta_1 s}}{4\beta_1}(1-\E^{-2\beta_1 T})\right)
+ (1-\lambda) \exp\left(\E^{-\beta_3(s-T)}w_2 +\frac{\nu_3^2\E^{-2\beta_3 s}}{4\beta_3}(1-\E^{-2\beta_3T})\right)\bigg\}.
\end{align*}
This function is then integrated between $T$ and~$T+\Delta$ using Simpson's rule. We propagate this terminal condition through time via an explicit finite-difference scheme on a two-dimensional grid. The choice of boundary conditions requires more thought. By the form of the function~$f_t(w)$ it is apparent that the option price should vanish as~$w_1$ and~$w_2$ both tend to~$-\infty$; it is however unclear how the price shall behave in other limits and with respect to the interaction between~$w_1$ and~$w_2$ when the problem is not symmetric. The pronounced convex shape of the solution (see Figure~\ref{fig:VIX_pde}) seems unfit for Dirichlet or Neumann conditions but invites us to move one order further and impose that the second derivative is zero at the boundary. This latter choice indeed brings the most consistency. Furthermore, for the sets of parameters that we experienced with, the best stability is achieved when choosing the number of time steps~$m$ and the number of points on one slice of the grid~$n$ to be equal. The complexity of the algorithm is thus~$\Oo(n^2m)=\Oo(n^3)$. Better stability and convergence could be achieved with an implicit or Crank-Nicolson method; we leave this more thorough analysis for future research.

In this particular example, the conditional expectation can be expressed analytically therefore Monte Carlo schemes are rather straightforward; one simply needs an integration step to recover the payoff. This method has the advantage of computing at almost no cost option prices with new payoff functions since one can reuse the same simulated trajectories, while the PDE method requires to start over. On the other hand, modifying the initial condition entails simulating new paths while the PDE method outputs a price for every spatial point on the grid. The initial condition~$w$ of the OU process does not have an obvious financial interpretation. Nevertheless, we recall point (5) of Remark~\ref{rem:mainPDE} which details how, in the one-dimensional case, one can recover~$\Theta^t_s$ (and hence~$w$) from the forward variance curve. In addition, the PDE scheme offers prices for all times to maturity on the grid as well as derivatives which can be leveraged to compute greeks.

We set the following parameters: $T=1, \Delta=30/365$, $\rho=0.5, \zeta=1, \lambda=0.2, \beta_1=0.2,\beta_3=0.4,\nu_1=0.3,\nu_3=0.5$, with~$m=100$ time steps and a grid of size~$10$ with~$n=101$ points in each dimension. We compute a Call price with strike~$K=1$. To illustrate our results, we present in Figure~\ref{fig:VIX_pde} slices of the solution to the pricing PDE, one with fixed time and one with fixed~$w_2$.
\begin{figure}[!htbp]
\scalebox{0.8}{\includegraphics[center]{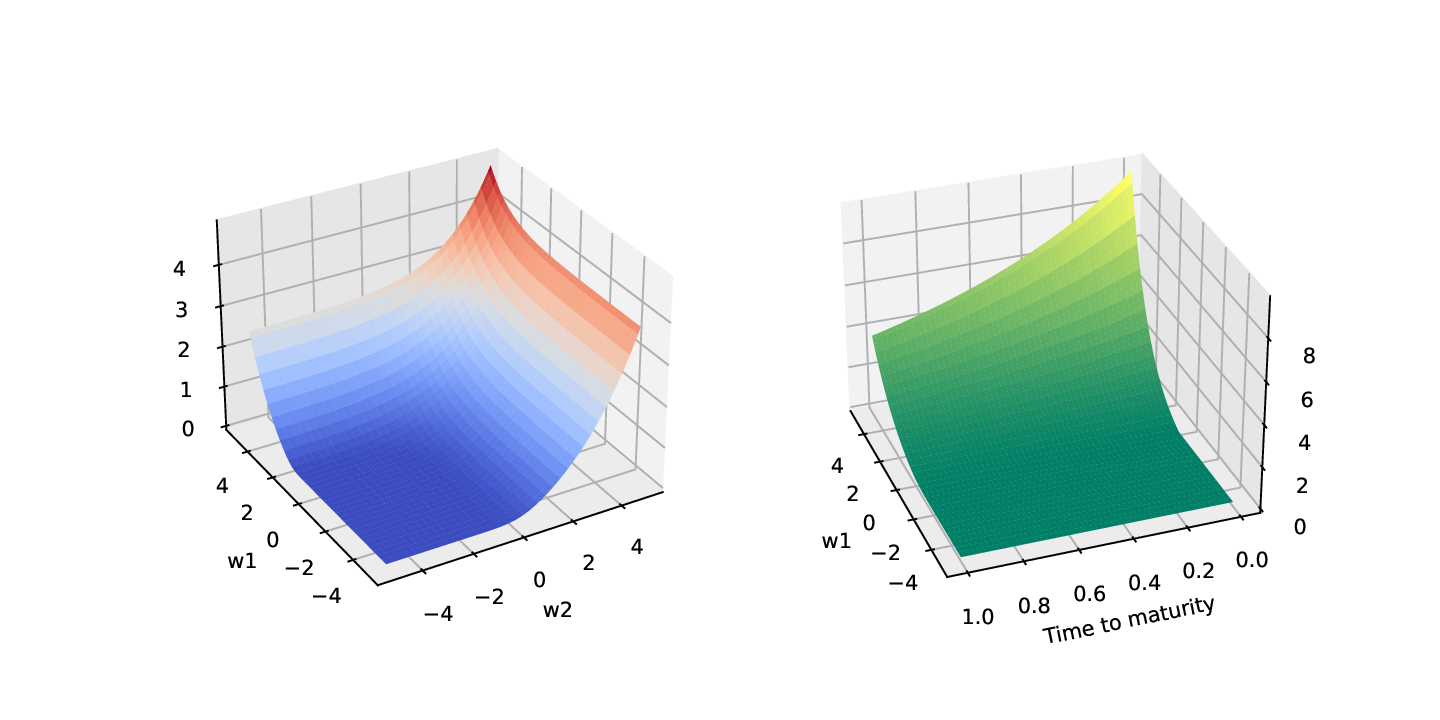}}
\caption{
VIX option prices computed with an explicit finite-difference scheme in a two-factor Bergomi model; $(w_1,w_2)$ is the starting point of the OU process. On the left, the price is expressed with respect to~$w_1,w_2$ at time~$0$. On the right, it is shown with respect to~$w_1$ and time to maturity, with~$w_2=0$ fixed. 
}
\label{fig:VIX_pde}
\end{figure}

\subsection{Implied volatility}
The dimension reduction also applies to the implied volatility PPDE investigated in Section~\ref{sec:IV}. Recalling the definitions of~$\varphi, \psi, \,u$ and~$\Sigma$ from this section, let~$\kappa>0$, $\tilde{\varphi}(t,w) := \varphi(t,\E^{-\beta(\cdot-t)} w)$, $\tilde{\psi}(t,w) := \psi(t,\E^{-\beta(\cdot-t)} w)=\log(\tilde{\varphi}(t,w)/\kappa)$ and~$\tilde{u}(t,w) := u(t,\E^{-\beta(\cdot-t)} w)$. Define~$\widetilde{\Sigma}$ to be the VIX implied volatility in this setting, that is the unique solution to~$\tilde{u}(t,w) = \BS(t,\tilde{\varphi}(t,w), \widetilde{\Sigma};T,\kappa)$. By identification we deduce that~$\Sigma(t,w) = \widetilde{\Sigma}(t,\E^{-\beta(\cdot-t)} w)$ and similarly we define the implied variance~$\widehat{\widetilde{\Sigma}}(t,w):=(T-t) \widetilde{\Sigma}^2(t,w)=\widehat{\Sigma}(t,\E^{-\beta(\cdot-t)} w)$. From the PPDE~\eqref{eq:IV_PPDE} and the relations between the derivatives that we computed in~\eqref{eq:Relation_Derivatives_Markov} we infer that the implied variance solves for all~$(t,w)\in[0,T]\times\RR^d$ the PDE
$$
\partial_t \widehat{\widetilde{\Sigma}} - \big\langle \nabla \widehat{\widetilde{\Sigma}}, \beta w \rangle + \frac12 \left(\big(\nabla \widehat{\widetilde{\Sigma}}\big)^\top \nabla \tilde{\psi} 
     + \nabla^2 \widehat{\widetilde{\Sigma}}  \right) -\left(\frac{1}{16}+\frac{1}{4\widehat{\widetilde{\Sigma}}}\right)
    \big\lvert\nabla\widehat{\widetilde{\Sigma}}\big\lvert^2 + \Big\lvert \nabla  \tilde{\psi} - \frac{ \tilde{\psi}}{2\widehat{\widetilde{\Sigma}}}\nabla\widehat{\widetilde{\Sigma}}\Big\lvert^2 =0,
$$
where~$\abs{\cdot}$ stands for the Euclidean norm in~$\RR^m$ and for both~$\mathfrak{u}\in\{\widehat{\widetilde{\Sigma}}, \tilde{\psi} \}$ we denote  $\nabla \mathfrak{u} = \langle \nabla \mathfrak{u}, \beta \rangle$ and~ $\nabla^2 \mathfrak{u} = \langle \nabla^2 \mathfrak{u}, (\nu,\nu) \rangle$.
\section{Proofs of Section~\ref{sec:PPDE}}\label{sec:proof}

\subsection{Useful estimates}
The derivation, growth and regularity estimates of the partial derivatives of~$u$ hinge on recurrent estimates, summarised in Lemmas \ref{lemma:Estimates_fg} and \ref{lemma:Limit_payoff} below. Recall the definitions of $\Wh^{t,\bx}$ and~$V^{t,\bx}$ from~\eqref{eq:defs_tx}. 
\begin{lemma}\label{lemma:Estimates_fg}
    Let~$g\in C^0_{{\rm poly}}((0,+\infty))$, $h\in C^{0,0}_{{\rm exp}+}([0,\Tb]\times\RR^d)$ and assume that~$f$ satisfies Assumption~\ref{assu:coefs} (iia) if~$\underline{\kappa_g}=0$ and (iib) if~$\underline{\kappa_g}>0$. Let $p\ge1$, $t,\tb\in[0,T]$, $\bx,\bxb\in  C^0 ([0,\Tb], \mathbb{R}^d)$. There exist $C,\kappa>1$ depending on $C_g,\overline{\kappa_g},\underline{\kappa_g},C_h,c_h,\overline{\kappa_h},\underline{\kappa_h},p,\norm{K}_2,\Tb,$ such that, for all $s \in[0,\Tb]$, 
    \begin{align}
    \EE\left[\abs{h_s(\Wh_s^{t,\bx}) }^p\right]
    &\le C \big(1+ \E^{\kappa\bx_s}\big);  \label{eq:Vol_bound}\\
    \EE\left[\sup_{s\in[0,\Tb]} \abs{h_s(\Wh_s^{t,\bx}) }^p\right]
    &\le C \big(1+ \E^{\kappa\norm{\bx}_\infty}\big);  \label{eq:Vol_bound_infty}\\
     \EE\left[\abs{g(V^{t,\bx}_T)}^p\right]
    &\le  C\left(1+ \E^{\kappa\norminfty{\bx}}\right).\label{eq:Payoff_bound} 
    \end{align}
    If moreover Assumption~\ref{assu:kernel} holds, $g\in C^1_{{\rm poly}}((0,+\infty))$ and $h\in C^{0,1}_{{\rm exp}+}([0,\Tb]\times\RR^d)$, then there exist $C,\kappa>1$ depending on $C_g,\overline{\kappa_g},\underline{\kappa_g},C_h,c_h,\overline{\kappa_h},\underline{\kappa_h},p,\norm{K}_2,\Tb,$ such that, for all $t\le s \le \Tb$, 
    \begin{align}
     \EE\left[\abs{ h_s(\Wh_s^{t,\bx})-h_s(\Wh_s^{t,\bxb})}^p\right] 
    &\le C\abs{\bx_s-\bxb_s}^p (1+\E^{\kappa\bx_s}+\E^{\kappa\bxb_s}), \label{eq:Vol_limit}\\
    \EE\left[\abs{g(V^{t,\bx}_T) - g(V^{t,\bxb}_T)}^p\right] 
    &\le C\norminfty{\bx-\bxb}^p (1+ \E^{\kappa \norminfty{\bxb}} + \E^{\kappa\norminfty{\bx}}); \label{eq:Payoff_limit} \\
    \EE\left[\abs{g(V^{t,\bx}_T) - g(V^{t,\bxb}_T)}^p\right] 
    &\le C  \norm{\bx-\bxb}_{L^1[t,\Tb]}^p \,(1+ \E^{\kappa \norminfty{\bxb}} + \E^{\kappa\norminfty{\bx}});
    \label{eq:Payoff_L2}\\
    \EE\left[\abs{g(V^{t,\bx}_T) - g(V^{\tb,\bx}_T)}^p\right] 
    &\le C \abs{t-\tb}^{Hp} (1+\E^{\kappa\norminfty{\bx}}). \label{eq:Payoff_time_ref}
    \end{align}
\end{lemma}
In particular, 
\begin{itemize}
    \item \eqref{eq:Vol_bound} holds for $\displaystyle h\in\left\{f,\frac{\partial f}{\partial x_i},\frac{\partial^2 f}{\partial x_i\partial x_j},\frac{\partial^3 f}{\partial x_i \partial x_j \partial x_k}: 1\le i,j,k\le d \right\}$;
    \item \eqref{eq:Payoff_bound} holds for $g\in\{\phi,\phi',\phi'',\phi'''\}$;
    \item \eqref{eq:Vol_limit} holds for $\displaystyle h\in\left\{f,\frac{\partial f}{\partial x_i},\frac{\partial^2 f}{\partial x_i\partial x_j}: 1\le i,j\le d \right\}$;
    \item \eqref{eq:Payoff_limit}, \eqref{eq:Payoff_L2} and \eqref{eq:Payoff_time_ref} hold for $g\in\{\phi,\phi',\phi''\}$.
\end{itemize}
\begin{remark}
    Assumption~\ref{assu:kernel} is only used to prove~\eqref{eq:Payoff_time_ref}.
\end{remark} 
\begin{remark}
    In the case (iib), the exponential decay of~$f$ guarantees that~$\EE\left[\abs{V^{t,\bx}_T}^{-q}\right]\le C \E^{\kappa \norminfty{\bx}}$ for all~$q>0$, see~\eqref{eq:Bound_Inverse_Poly}. This bound recovers the setting of polynomial growth of~$g$ and allows to deal with both cases in a unified manner. It is inspired from~\cite[Lemma 6.14]{jacquier2021rough}.
\end{remark}


\begin{proof} 
Throughout the proof, constants $C,\kappa\ge1$ can change from line to line.

    {\bf (1)} Thanks to the exponential growth of $h$ and the Gaussianity of $I^t_s$, we have, for all $s\ge t$,
    \begin{align} \label{eq:first_bound_h}
    \EE\left[\abs{h_s(\Wh_s^{t,\bx}) }^p\right]
    \le C_h^p\EE\left[1+\E^{p\kappa_h(s+\bx_s+I^t_s)}\right]
    = C_h^p \Big(1+ \E^{p^2\kappa_h^2 \EE[(I^t_s)^2] + p\kappa_h (s+\bx_s)} \Big),
    \end{align}
    where $\EE[(I^t_s)^2]=\int_t^s \abs{K(s,r)}^2\dr \le \norm{K}_2$. In the other case $s< t$, $\Wh^{t,\bx}_s=\bx_s$ hence the exponential growth of $h$ yields \eqref{eq:Vol_bound}.

    {\bf (2)} In the same manner as~\eqref{eq:first_bound_h}, we have, for all $s\ge t$,
\begin{align*}
    \EE\left[\sup_{s\in[0,\Tb]} \abs{h_s(\Wh_s^{t,\bx}) }^p\right]
    \le C\left(1+ \E^{p\kappa_h (\Tb+\norm{\bx}_\infty)} \EE\left[\E^{p\kappa_h \norm{I^t}_\infty} \right]\right),
\end{align*}
and the claim follows since ~$I^t$ is a Gaussian process. The other case $s< t$ is the same as {\bf (1)}.
    
    {\bf (3)} In the case~$\underline{\kappa_g}=0$, by the polynomial growth of $g$ and Jensen's inequality
    \begin{align*}
        \EE\left[\abs{g(V^{t,\bx}_T)}^p\right]
        \le C_g^p \EE\left[ 1+ \abs{V^{t,\bx}_T}^{p\kappa_g }\right]
        \le C_g^p \Big(1+ \Tb^{p\kappa_g} \sup_{s\in[0,\Tb]} \EE\left[\abs{f_s(\Wh_s^{t,\bx}) }^{p\kappa_g}\right]\Big),
    \end{align*}
    which yields \eqref{eq:Payoff_bound} by applying \eqref{eq:Vol_bound}. If~$\underline{\kappa_g}>0$ then
    \begin{align*}
        \EE\left[\abs{g(V^{t,\bx}_T)}^p\right]
        \le C_g^p \EE\left[ 1+ \abs{V^{t,\bx}_T}^{p\overline{\kappa_g}}+\abs{V^{t,\bx}_T}^{-p\underline{\kappa_g}}\right].
    \end{align*}
    The first term is identical to the case~$\underline{\kappa_g}=0$. For the second one, we set~$q=p\underline{\kappa_g}>0$, use the exponential decay property and exploit  Jensen's inequality with the concavity of the logarithm to obtain
    \begin{align}\label{eq:Bound_Inverse_Poly}
        &\EE\left[\abs{V^{t,\bx}_T}^{-q}\right] 
        =\EE\left[\E^{-q \ln(V^{t,\bx}_T)}\right]
        \le \EE\left[\exp\left(-\frac{q}{\abs{\Dd}}\EE_T\int_{\Dd}\ln\Big(f\big(\Wh_r^{t,\bx}\big)\Big)\dr\right)\right] \\
        &\le  \EE\left[\exp\left(-\frac{q}{\abs{\Dd}}\EE_T \int_{\Dd}\left\{\ln(c_h\E^{-\underline{\kappa_f} r})+\ln\Big(\sum_{i=1}^d \E^{-\underline{\kappa_h} (\Wh^{t,\bx}_r)^{(i)} }\Big)\right\}\dr\right)\right] \nonumber\\
        &\le C \EE\left[\exp\left(-\frac{q}{\abs{\Dd}}\EE_T\int_{\Dd} \frac{1}{d}\sum_{i=1}^d\ln\Big(d \E^{-\underline{\kappa_h} (\Wh^{t,\bx}_r)^{(i)}}\Big)\dr\right)\right]  \nonumber \\
        &\le C \E^{ \frac{q\underline{\kappa_h}}{ d} \norminfty{\bx}}
        \EE\left[\exp\left(\frac{q\underline{\kappa_h}}{\abs{\Dd} d}\int_{\Dd} \sum_{i=1}^d B_r^{(i)} \dr\right)\right] \le C \E^{\kappa \norminfty{\bx}}, \nonumber
    \end{align}
    where~$\Big(B_r := (B_r^{(i)})_{i=1}^d := \big(\sum_{j=1}^m \int_t^{T\wedge r} K_{ij}(r,v)\D W_v^j\big)_{i=1}^d \Big)_{r\in\Dd}$ is a continuous Gaussian process and thus the expectation on the last line is bounded by~\cite[Lemma 6.13]{jacquier2021rough}. This concludes the proof of~\eqref{eq:Payoff_bound}.
    
    {\bf (4)} By Taylor's theorem with integral remainder, we have
    \begin{align}
        h_s(\Wh_s^{t,\bxb})-h_s(\Wh_s^{t,\bx})
        = \int_0^1 \Big\langle \nabla h_s \big( \lambda \Wh_s^{t,\bxb} +(1-\lambda)\Wh_s^{t,\bx} \big),
        (\Wh_s^{t,\bxb}-\Wh_s^{t,\bx})\Big\rangle  \D\lambda.
    \end{align}
    Notice that $\Wh_s^{t,\bxb}-\Wh_s^{t,\bx}=\bxb_s-\bx_s$, and for all $\lambda\in[0,1]$, $ \lambda \Wh_s^{t,\bxb} +(1-\lambda)\Wh_s^{t,\bx} = \Wh_s^{t,\lambda\bxb+(1-\lambda)\bxb}$.
    Hence, by Cauchy-Schwarz and Jensen's inequalities and \eqref{eq:Vol_bound},
    \begin{align}\label{eq:h_Taylor}
         \EE\left[\abs{ h_s(\Wh_s^{t,\bxb})-h_s(\Wh_s^{t,\bx})}^p\right] 
         &\le \abs{\bxb_s-\bx_s}^p \int_0^1 \EE\abs{\nabla h_s(\Wh_s^{t,\lambda\bxb+(1-\lambda)\bx})}^p \D\lambda\\
         &\le \abs{\bxb_s-\bx_s}^p \int_0^1 C(1+\E^{\kappa(\lambda\bxb_s+(1-\lambda)\bx_s)}) \D\lambda \nonumber\\
         &\le C\abs{\bx_s-\bxb_s}^p (1+\E^{\kappa\bx_s}+\E^{\kappa\bxb_s}),\nonumber
    \end{align}
    which yields \eqref{eq:Vol_limit}.

    {\bf (5)} By Taylor's theorem with integral remainder, we have
    \begin{align}\label{eq:Taylortog}
        g(V^{t,\bxb}_T) - g(V^{t,\bx}_T) = (V^{t,\bxb}_T-V^{t,\bx}_T)  \int_0^1 g'(\lambda V^{t,\bxb}_T +(1-\lambda) V^{t,\bx}_T ) \D\lambda.
    \end{align}
    For all~$p>1$, Jensen's inequality and \eqref{eq:Vol_limit} yield
    \begin{align*}
\EE\abs{V^{t,\bx}_T-V^{t,\bxb}_T}^{p} 
\le \Tb^{p-1} \sup_{s\in[0,\Tb]}  \EE\left[\abs{ f_s(\Wh_s^{t,\bx})-f_s(\Wh_s^{t,\bxb})}^{p}\right] 
\le \Tb^{p-1} C \norminfty{\bx-\bxb}^{p} (1+\E^{\kappa\norminfty{\bx}}+\E^{\kappa\norminfty{\bxb}}).
    \end{align*} 
    As~$g\in C^1_{{\rm poly}}$, the polynomial growth entails for all~$p>1$,
    \begin{align*}
        \EE\abs{g'(\lambda V^{t,\bxb}_T +(1-\lambda) V^{t,\bx}_T )}^{p} 
        &\le C\Big(1+ \EE\abs{V^{t,\bxb}_T }^{p\overline{\kappa_g}} + \EE\abs{V^{t,\bx}_T }^{p\overline{\kappa_g}} + \EE\abs{\lambda V^{t,\bxb}_T +(1-\lambda) V^{t,\bx}_T }^{-\underline{\kappa_g} p} \Big) .
    \end{align*}
    Note that~$\lambda V^{t,\bxb}_T +(1-\lambda) V^{t,\bx}_T = \int_\Dd \EE_T\big[\lambda f(\Wh^{t,\bxb}_r) + (1-\lambda) f(\Wh^{t,\bx}_r) \big] \dr$.
    In the case~$\underline{\kappa_g}>0$, the exponential decay assumption allows to derive the following bound with the same arguments as~\eqref{eq:Bound_Inverse_Poly} for any~$q>0$:
    \begin{align*}
        \EE\left[\abs{\lambda V^{t,\bxb}_T +(1-\lambda) V^{t,\bx}_T }^{-q}\right]
        &\le \EE\left[ \exp\left(-\frac{q}{\abs{\Dd}} \int_\Dd \EE_T\big[\lambda \ln(f(\Wh^{t,\bxb}_r)) + (1-\lambda) \ln(f(\Wh^{t,\bx}_r)) \big] \dr \right)\right] \\
        &\le C \E^{\kappa \norminfty{\bxb} +\kappa \norminfty{\bx}}\le C \left(\E^{\kappa \norminfty{\bxb}} +  \E^{\kappa \norminfty{\bx}} \right). 
    \end{align*}
    In virtue of this inequality and~\eqref{eq:Payoff_bound}, we get~$\EE\abs{g'(\lambda V^{t,\bxb}_T +(1-\lambda) V^{t,\bx}_T )}^{p} \le  C\left(1+ \E^{\kappa\norminfty{\bxb}}+\E^{\kappa\norminfty{\bx}}\right)$.
    The claim \eqref{eq:Payoff_limit} follows after an application of Cauchy-Schwarz inequality to the $L^p$ norm of~\eqref{eq:Taylortog}.

{\bf (6)} Similarly to \eqref{eq:h_Taylor}, we have by Taylor's theorem
    \begin{align*}
\abs{V^{t,\bx}_T-V^{t,\bxb}_T} 
&\le \EE_T \int_0^{\Tb} \abs{\bx_s-\bxb_s} \left(\int_0^1 \abs{\nabla f_s(\Wh_s^{t,\lambda\bxb+(1-\lambda)\bx})}\D \lambda \right)\ds \\
&\le \int_0^{\Tb} \abs{\bx_s-\bxb_s}\ds\,  \EE_T \left[\sup_{s\in[0,\Tb]} \int_0^1 \abs{\nabla f_s(\Wh_s^{t,\lambda\bxb+(1-\lambda)\bx})}\D \lambda\right].
    \end{align*}   
Therefore, in virtue of \eqref{eq:Vol_bound_infty} we obtain
\begin{align*}
    \EE\left[\abs{V^{t,\bx}_T-V^{t,\bxb}_T}^p\right] 
    \le C \left( \int_0^{\Tb} \abs{\bx_s-\bxb_s}\ds\right)^p \,(1+ \E^{\kappa \norminfty{\bxb}} + \E^{\kappa\norminfty{\bx}}).
\end{align*}
Equation \eqref{eq:Taylortog} and the ensuing computations yield the claim.

{\bf (7)} We consider two time points $0\le \tb<t\le \Tb$ and, similarly as above, using Taylor's theorem, Jensen and Cauchy-Schwarz inequalities,
\begin{align}
    \EE\left[\abs{ h_s(\Wh_s^{t,\bx})-h_s(\Wh_s^{\tb,\bx})}^p\right]^2 
         &\le \EE\left[\int_0^1 \abs{\nabla h_s\big( \lambda \Wh^{t,\bx}_s +(1-\lambda) \Wh^{\tb,\bx}_s \big)}^{2p} \D\lambda \right] \EE\left[\abs{\Wh_s^{t,\bx}-\Wh_s^{\tb,\bx}}^{2p} \right] \nonumber\\
         &\le C\big(1+\E^{\kappa\bx_s} \big) \EE\left[\abs{\int_{\tb}^t K(s,r) \D W_r}^{2p}  \right].
\label{eq:vol_time_reg}
\end{align}
Exploiting BDG inequality and Assumption \ref{assu:kernel} we have 
\begin{align*}
    \EE\abs{\int_{\tb}^t K(s,r) \D W_r}^{2p} 
    \le C \left(\int_{\tb}^t \abs{K(s,r)}^2 \dr \right)^p
    \le C \left(\int_{\tb}^t (s-r)^{2H-1} \dr \right)^p
    \le C (t-\tb)^{2Hp},
\end{align*}
where $C>0$ is a constant that changes from inequality to inequality. In the case $2H\le1$ we use that $\RR_+\ni x\mapsto(x+t-\tb)^{2H}-x^{2H}$ is decreasing and hence $(s-\tb)^{2H}-(s-t)^{2H} \le (t-\tb)^{2H}$. If $2H\ge1$ then~$(s-\tb)^{2H}-(s-t)^{2H} \le 2H \Tb^{2H-1} (t-\tb)$ since~$x\mapsto (s-x)^{2H}$ is continuously differentiable.

Again by Taylor's theorem, we have 
$$
g(V^{t,\bx}_T) - g(V^{\tb,\bx}_T) = (V^{t,\bx}_T-V^{\tb,\bx}_T) \int_0^1 g'(\lambda V^{\tb,\bx}_T +(1-\lambda) V^{t,\bx}_T ) \D\lambda.
$$
For all~$p>1$, the integral is bounded in~$L^p$ by similar arguments as item \textbf{(4)},
and by \eqref{eq:vol_time_reg} we have
    \begin{align*}
\EE\abs{V^{t,\bx}_T-V^{\tb,\bx}_T}^{p} 
\le \Tb \sup_{s\in[0,\Tb]}  \EE\left[\abs{ f_s(\Wh_s^{t,\bx})-f_s(\Wh_s^{\tb,\bx})}^{p}\right]\le C \left(1+\E^{\kappa\norminfty{\bx}}\right)(t-\tb)^{Hp}.
\end{align*}
Applying Cauchy-Schwarz and \eqref{eq:Payoff_bound} yields
\begin{align*}
    \EE\abs{g(V^{t,\bx}_T) - g(V^{\tb,\bx}_T)}^p 
    \le C\left(1+\E^{\kappa\norminfty{\bx}}\right)(t-\tb)^{Hp},
\end{align*}
which yields the claim.    
\end{proof}

\begin{lemma}\label{lemma:Limit_payoff}
Let $g\in C^1_{{\rm poly}}(\RR)$ and assume that~$f$ satisfies Assumption~\ref{assu:coefs} (iia) if~$\underline{\kappa_g}=0$ and (iib) if~$\underline{\kappa_g}>0$. Let $p>1$, $t\in[0,T]$, $\bx,\bxb\in  C^0 ([0,\Tb], \mathbb{R}^d)$ and for any $\eta\in \Ww_t$ define $\widetilde{g}(\eta):= \int_0^1 g\big(\lambda V_T^{t,\bx+\eta}+(1-\lambda) V_T^{t,\bx}\big)\D \lambda$. Then we have
\begin{align}
    \lim_{\norminfty{\eta}\to0} \frac{\EE\abs{\widetilde{g}(\eta)-g(V_T^{t,\bx})}^p}{\norminfty{\eta}} &= 0.
    \label{eq:Tilde_limit}
\end{align}
In particular, this holds for $g\in\{\phi',\phi''\}$.
\end{lemma}
\begin{proof}
By Jensen's and Cauchy-Schwarz inequalities,
\begin{align*}
    \EE\abs{\widetilde{g}(\eta) - g(V_T^{t,\bx})}^p 
    &= \EE\abs{\int_0^1 g( V_T^{t,\bx}+\lambda(V_T^{t,\bx+\eta}-V_T^{t,\bx}))-g(V_T^{t,\bx}) \D \lambda}^p \\
    & = \EE\abs{\int_0^1  \lambda(V^{t,\bx+\eta}_T-V^{t,\bx}_T) \int_0^1 g'\Big( V^{t,\bx}_T +\overline{\lambda} \lambda \big(V^{t,\bx+\eta}_T-V^{t,\bx}_T)\big) \D\overline{\lambda}\,  \D\lambda }^p\\
    & \le \EE\abs{V^{t,\bx+\eta}_T-V^{t,\bx}_T}^p \int_0^1 \int_0^1 \abs{g'\Big( V^{t,\bx}_T +\overline{\lambda} \lambda \big(V^{t,\bx+\eta}_T-V^{t,\bx}_T)\big) }^p \D\overline{\lambda}\,  \D\lambda \\
    & \le \left(\EE\abs{V^{t,\bx+\eta}_T-V^{t,\bx}_T}^{2p} \int_0^1 \int_0^1 \EE\abs{g'\Big( V^{t,\bx}_T +\overline{\lambda} \lambda \big(V^{t,\bx+\eta}_T-V^{t,\bx}_T)\big) }^{2p} \D\overline{\lambda}\,  \D\lambda\right)^{1/2}.
\end{align*}
The polynomial growth of $g'$ combined with \eqref{eq:Payoff_bound} and~\eqref{eq:Bound_Inverse_Poly} yield that the second factor is bounded for any~$\underline{\kappa_g}\ge0$. Meanwhile, thanks to \eqref{eq:Payoff_limit}, there exists $C,\kappa>1$ independent of $\eta$ such that
$$
\EE\abs{V^{t,\bx+\eta}_T-V^{t,\bx}_T}^{2p} 
\le  C \norminfty{\eta}^{2p} \Big(1+ \E^{\kappa(\norminfty{\bx}+ \norminfty{\eta})} + \E^{\kappa\norminfty{\bx}} \Big),
$$
which yields the claim as~$p>1$.
\end{proof}


\subsection{Proof of Lemma \ref{lemma:Czerotwo}}
In order to prove that $u\in \Cc^{0,2}_{\half}(\Lambda)$, we first extend $u$ to $\widetilde{\Lambda}$ using the same definition~\eqref{eq:Def_u}. We then need to check that the first and second Fréchet derivatives exist and satisfy the three items of Definition \ref{def:Cplusalpha}. The latter are proved in Lemmas \ref{lemma:DerOne} and \ref{lemma:DerTwo} for the first and second derivatives respectively.
\begin{lemma}\label{lemma:DerOne}
Let Assumptions \ref{assu:kernel} and \ref{assu:coefs} hold. Let $t\in[0,T]$, $\bx,\bxb\in  C^0 ([0,\Tb], \mathbb{R}^d)$ and $\eta\in  \Ww_t$. Then we have
\begin{align}\label{eq:DerOne}
    \langle D_{\bx} u(t,\bx) ,\eta\rangle =\EE\left[\phi'(V^{t,\bx}_T) \int_t^{\Tb} \Big\langle \nabla f_s(\Wh_s^{t,\bx}), \eta_s \Big\rangle \ds\right].
\end{align}
Moreover, if $\eta$ is supported on~$[t,t+\delta]$ for some~$0\le\delta\le \Tb-t$ then $D_\bx u$ satisfies the estimates \eqref{eq:DerOne_bound} and \eqref{eq:DerOne_reg} with~$\alpha=\half$, $\varrho={\rm id}$ and is continuous in $t$.
\end{lemma}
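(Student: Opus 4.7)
The plan is to obtain \eqref{eq:DerOne} by differentiating under the expectation in $u(t,\omega)=\EE[\phi(V_T^{t,\omega})]$ (see \eqref{eq:Vtomega}), and then read off the growth, regularity and time-continuity estimates from Lemma~\ref{lemma:Estimates_fg}. Because $\Wh_s^{t,\omega+\ep\eta}-\Wh_s^{t,\omega}=\ep\eta_s$ with $\eta_s=0$ for $s<t$, Taylor's formula with integral remainder applied to $f_s$ gives
\begin{equation*}
\frac{V_T^{t,\omega+\ep\eta}-V_T^{t,\omega}}{\ep}
=\EE_T\int_t^{\Tb}\int_0^1\big\langle \nabla f_s(\Wh_s^{t,\omega}+\lambda\ep\eta_s),\eta_s\big\rangle\,d\lambda\,ds,
\end{equation*}
while a second Taylor expansion applied to $\phi$ rewrites the difference quotient of $u$ as the expectation of this random integral multiplied by $\widetilde{\phi'}(\ep):=\int_0^1\phi'(\lambda V_T^{t,\omega+\ep\eta}+(1-\lambda)V_T^{t,\omega})\,d\lambda$. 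I would pass to the limit $\ep\downarrow 0$ inside $\EE$ by dominated convergence, using the estimates \eqref{eq:Vol_bound}--\eqref{eq:Payoff_bound} for uniform-in-$\ep$ dominants and Lemma~\ref{lemma:Limit_payoff} for the $L^p$ convergence $\widetilde{\phi'}(\ep)\to\phi'(V_T^{t,\omega})$; the inner $\EE_T$ is absorbed by the tower property since $\phi'(V_T^{t,\omega})$ is $\Ff_T$-measurable, which yields precisely \eqref{eq:DerOne}. Linearity and continuity of the right-hand side in $\eta$ (via the $\normTb{\eta}$ factor in the estimates below) promote this Gateaux derivative to a Fréchet one.

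For the growth bound \eqref{eq:DerOne_bound} with $\alpha=\half$, assuming $\eta$ is supported on $[t,t+\delta]$, I would apply Cauchy--Schwarz inside the expectation, pull $\normTb{\eta}$ out of the $s$-integral, and invoke \eqref{eq:Payoff_bound} and \eqref{eq:Vol_bound}:
\begin{equation*}
|\langle \partial_\omega u(t,\omega),\eta\rangle|\le \normTb{\eta}\int_t^{t+\delta}\EE\big[|\phi'(V_T^{t,\omega})|^2\big]^{1/2}\EE\big[|\nabla f_s(\Wh_s^{t,\omega})|^2\big]^{1/2}\,ds\le C(1+\E^{\kappa\normTb{\omega}})\normTb{\eta}\,\delta,
\end{equation*}
and the trivial bound $\delta\le \Tb^{1/2}\delta^{1/2}$ gives the advertised $\delta^{1/2}$. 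For the modulus-of-continuity estimate \eqref{eq:DerOne_reg}, I would add and subtract $\phi'(V_T^{t,\omegab})\nabla f_s(\Wh_s^{t,\omega})$ inside the expectation and control the two resulting cross terms by Cauchy--Schwarz, using \eqref{eq:Payoff_limit} combined with \eqref{eq:Vol_bound} for the first, and \eqref{eq:Vol_limit} combined with \eqref{eq:Payoff_bound} for the second; both yield a Lipschitz modulus $\varrho(x)=x$ and the same $\delta$ factor as above.

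Finally, for the continuity of $t\mapsto\langle\partial_\omega u(t,\omega),\eta\rangle$ (with $\eta$ extended by zero outside $[t,\Tb]$), I would split the difference into contributions from the shifting integration endpoint, the $t$-shift of $V_T^{t,\omega}$, and the $t$-shift of $\Wh_s^{t,\omega}$, controlling the latter two via \eqref{eq:Payoff_time_ref} and the BDG-based estimate \eqref{eq:vol_time_reg}, which both rely on Assumption~\ref{assu:kernel}. The main obstacle is the Fubini/tower juggling in the first step: one must justify interchanging the $\ep$-limit with both the outer $\EE$ and the inner $\EE_T$ on a random integrand that depends on $\ep$ through two nested Taylor remainders, and this is precisely where the joint $L^p$ estimates of Lemma~\ref{lemma:Estimates_fg} do the heavy lifting. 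Everything else reduces to routine bookkeeping with those estimates.
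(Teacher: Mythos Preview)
Your approach is essentially the same as the paper's: both apply Taylor with integral remainder to $\phi$ (producing $\widetilde{\phi'}(\ep)$) and to $f_s$ (producing the $\lambda$-integral of $\nabla f_s$), then control the two resulting cross terms via Cauchy--Schwarz together with the estimates of Lemma~\ref{lemma:Estimates_fg} and Lemma~\ref{lemma:Limit_payoff}; parts (i)--(iii) likewise match, and your explicit remark on the tower property and the Gateaux-to-Fr\'echet upgrade are points the paper leaves implicit. The only cosmetic difference is that the paper shows $L^2$-convergence of the integrand directly rather than invoking dominated convergence, but this is equivalent here.
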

\begin{proof}
$\bm{(D)}$ 
Let $t\in[0,T]$, $\bx,\bxb\in  C^0 ([0,\Tb], \mathbb{R}^d)$ and $\eta\in  \Ww_t$. 
Recall that, by the definition of the Fréchet derivative~\eqref{eq:PathwiseDerDef}, we need to prove the convergence
$$
\lim_{\norminfty{\eta}\to0} \frac{u(t,\bx+\eta)-u(t,\bx)-\EE\left[\phi'(V^{t,\bx}_T) \int_t^{\Tb} \Big\langle \nabla f_s(\Wh_s^{t,\bx}), \eta_s \Big\rangle \ds\right]}{\norminfty{\eta}} = 0.
$$
We write
\begin{align*}
    \phi(V^{t,\bx+\eta}_T) - \phi(V_T^{t,\bx}) = (V^{t,\bx+\eta}_T - V_T^{t,\bx}) \widetilde{\phi'}(\eta),
\end{align*}
where $\widetilde{\phi'}$ is defined as in Lemma \ref{lemma:Limit_payoff}.
We consider
\begin{align}\label{eq:Decomposition_FD}
    &\phi(V^{t,\bx+\eta}_T) - \phi(V_T^{t,\bx}) - \phi'(V^{t,\bx}_T) \int_t^{\Tb} \Big\langle \nabla f_s(\Wh_s^{t,\bx}), \eta_s \Big\rangle\ds \\
    &= \left\{V^{t,\bx+\eta}_T - V_T^{t,\bx}- \int_t^{\Tb}   \Big\langle \nabla f_s(\Wh_s^{t,\bx}), \eta_s \Big\rangle\ds \right\} \widetilde{\phi'}(\eta) + \left\{\widetilde{\phi'}(\eta) -\phi'(V_T^{t,\bx})\right\} \int_t^{\Tb}   \Big\langle \nabla f_s(\Wh_s^{t,\bx}), \eta_s \Big\rangle\ds. \nonumber
\end{align}
By Cauchy-Schwarz inequality and the bound \eqref{eq:Vol_bound} there is~$C,\kappa>1$ such that~$\EE\abs{\int_t^{\Tb}   \Big\langle \nabla f_s(\Wh_s^{t,\bx}), \eta_s \Big\rangle\ds}^2 \le C(1+\E^{\kappa\norminfty{\bx}}) \norminfty{\eta}^2$. Hence, Cauchy-Schwarz inequality yields
$$
\left(\frac{1}{\norminfty{\eta}}\EE\,\Big\lvert\!\left\{\widetilde{\phi'}(\eta) -\phi'(V_T^{t,\bx})\right\} \int_t^{\Tb}   \Big\langle \nabla f_s(\Wh_s^{t,\bx}), \eta_s \Big\rangle\ds\Big\lvert \right)^2
\le C(1+\E^{\kappa\norminfty{\bx}})\EE\abs{\widetilde{\phi'}(\eta) -\phi'(V_T^{t,\bx})}^2
$$
which tends to zero as~$\norminfty{\eta}$ goes to zero thanks to  the limit \eqref{eq:Tilde_limit} applied to $\phi'$.
Regarding the first term of \eqref{eq:Decomposition_FD}, first notice that~$\EE\big\lvert\widetilde{\phi'}(\eta)\big\lvert$ is bounded by the growth assumption of~$\phi'$ and the bound~\eqref{eq:Payoff_bound}. We have
\begin{align*}
    V^{t,\bx+\eta}_T - V_T^{t,\bx}
    &=\int_t^{\Tb} \EE_T\big[f_s(\Wh^{t,\bx+\eta}_s)-f_s(\Wh^{t,\bx}_s)\big] \ds \\
    &=\int_t^{\Tb} \EE_T\left[ 
    \Big\langle \nabla f_s(\lambda \Wh_T^{t,\bx+\eta} + (1-\lambda) \Wh^{t,\bx}_T), \Wh^{t,\bx+\eta}_s-\Wh^{t,\bx}_s\Big\rangle  \right] \ds,
\end{align*}
with $\lambda \Wh_T^{t,\bx+\eta} + (1-\lambda) \Wh^{t,\bx}_T = \Wh^{t,\bx+\lambda \eta}_T$ and $\Wh^{t,\bx+\eta}_s-\Wh^{t,\bx}_s=\eta_s$.
Hence Cauchy-Schwarz and Jensen's inequalities yield
\begin{align*}
    \frac{1}{\norm{\eta}_{[t,\Tb]}^2} \EE\left[\abs{V^{t,\bx+\eta}_T - V_T^{t,\bx}- \int_0^{\Tb}  \Big\langle \nabla f_s(\Wh_s^{t,\bx}), \eta_s \Big\rangle \ds}^2\right]
&= \frac{1}{\norm{\eta}_{[t,\Tb]}^2} \EE\abs{\int_0^{\Tb} 
 \Big\langle \nabla f_s(\Wh_s^{t,\bx+\lambda\eta}) - \nabla f_s(\Wh_s^{t,\bx}), \eta_s \Big\rangle
\ds  }^2 \\
&\le \int_t^{\Tb}  \int_0^1 \EE\left[\abs{\nabla f_s(\Wh^{t,\bx+ \lambda \eta}_s ) - \nabla f_s(\Wh^{t,\bx}_s ) }^2\right]\D \lambda \ds,
\end{align*}
which goes to zero as $\norminfty{\eta}$ goes to zero by \eqref{eq:Vol_limit} and proves \eqref{eq:DerOne}.

{\bf (i)} Let $\eta$ be supported on~$[t,t+\delta]$ from now on. By Cauchy-Schwarz and Jensen's inequalities, as well as estimates \eqref{eq:Payoff_bound} and \eqref{eq:Vol_bound}, we have
\begin{align*}
    \abs{\EE\left[\phi'(V^{t,\bx}_T) \int_t^{\Tb} \Big\langle \nabla f_s(\Wh_s^{t,\bx}), \eta_s \Big\rangle\ds\right]}^2
    &\le \EE\left[\abs{\phi'(V^{t,\bx}_T)}^2\right] \, \delta \int_t^{t+\delta} \EE \left[\abs{\Big\langle \nabla f_s(\Wh_s^{t,\bx}), \eta_s \Big\rangle}^2\right] \ds\\
    &\le \sup_{s\in[0,\Tb]} \left\{ C\Big( 1+ \E^{\kappa\bx_s}\Big) \delta  \EE\left[\abs{\nabla f_s(\Wh_s^{t,\bx})}^2\right]\right\} \int_t^{t+\delta} \abs{\eta_s}^2 \ds \\
    &\le C\Big( 1+ \E^{\kappa\norminfty{\bx}}\Big) \delta^2 \norm{\eta}_\infty^2,
\end{align*}
where the constants $C,\kappa>1$ are independent of $\bx,\eta$ and can change from line to line.
This proves~\eqref{eq:DerOne_bound} with $\alpha=1$. 

{\bf (ii)} Yet again the same inequalities entail
\begin{align*}
     &\abs{\langle D_\bx u(t,\bx) ,\eta\rangle -  \langle D_\bx u(t,\bxb) ,\eta\rangle }\\
     &= \abs{\EE\left[\left\{\phi'(V^{t,\bx}_T) - \phi'(V^{t,\bxb}_T) \right\}\int_t^{t+\delta} \Big\langle \nabla f_s(\Wh_s^{t,\bx}), \eta_s \Big\rangle \ds\right] 
     + \EE\left[\int_t^{t+\delta} \Big\langle \nabla f_s(\Wh_s^{t,\bx})-\nabla f_s(\Wh_s^{t,\bxb}), \eta_s\Big\rangle \ds \,\phi'(V^{t,\bxb}_T)\right] }\\
     &\le \bigg(\EE \abs{\phi'(V^{t,\bx}_T) - \phi'(V^{t,\bxb}_T)}^2 \delta \sup_{s\in[0,\Tb]} \EE\abs{\nabla f_s(\Wh_s^{t,\bx})}^2 \int_t^{t+\delta} \abs{\eta_s}^2\ds\\
     &\qquad+ \delta\sup_{s\in[0,\Tb]}\EE\abs{\nabla f_s(\Wh_s^{t,\bx})-\nabla f_s(\Wh_s^{t,\bxb})}^2\int_t^{t+\delta}\abs{\eta_s}^2\ds \, \EE\abs{\phi'(V^{t,\bxb}_T)}^2
     \bigg)^{1/2}.
\end{align*}
Combining the first four estimates of Lemma \ref{lemma:Estimates_fg} yield \eqref{eq:DerOne_reg} with $\alpha=1$ and $\varrho={\rm id}$.

{\bf (iii)} The time regularity unfolds as in {\bf (ii)} albeit using the estimates \eqref{eq:vol_time_reg} and \eqref{eq:Payoff_time_ref}.
\end{proof}


\begin{lemma}\label{lemma:DerTwo}
Let Assumptions \ref{assu:kernel} and \ref{assu:coefs} hold.
Let $t\in[0,T]$, $\bx,\bxb\in  C^0 ([0,\Tb], \mathbb{R}^d)$ and $\eta,\etab\in  \Ww_t$. Then we have
\begin{align}\label{eq:Der_two_u}
    \langle D_\bx^2 u(t,\bx) ,(\eta,\etab)\rangle
    &=\EE\left[\phi''(V^{t,\bx}_T) \int_t^{\Tb} \Big\langle\nabla f_s(\Wh_s^{t,\bx}), \eta_s \Big\rangle\ds \,\int_t^{\Tb} \Big\langle\nabla f_s(\Wh_s^{t,\bx}), \etab_s \Big\rangle\ds\right]\\
    &\quad+\EE\left[ \phi'(V^{t,\bx}_T) \int_t^{\Tb} \Big\langle\nabla^2 f_s(\Wh_s^{t,\bx}), (\eta_s ,\etab_s)\Big\rangle\ds 
    \right].
    \nonumber
\end{align}
If, moreover, $\eta$ and $\etab$ are supported on~$[t,t+\delta]$ for some~$0\le\delta\le \Tb-t$ then $D_\bx^2 u$ satisfies the estimates~\eqref{eq:DerTwo_bound} and \eqref{eq:DerTwo_reg} with~$\alpha=\half$, $\varrho={\rm id}$ and is continuous in $t$.

Moreover, for any~$\eta,\etab\in\Ww_t$, $t\le \bar{t}\le T$, the following $L^2$ estimates also hold
    \begin{align*}
     &\abs{\big\langle D^2_\bx u(t,\bx),(\eta,\etab)\big\rangle}
     \le C \big(1+\E^{\kappa\norminfty{\bx}}\big) \norm{\eta}_{L^2[t,\Tb]}\norm{\etab}_{L^2[t,\Tb]},\\
     &\abs{\big\langle D^2_\bx u(t,\bx),(\eta,\etab)\big\rangle -\big\langle D^2_\bx u(t,\bar{\bx}),(\eta,\etab)\big\rangle }\\
     &\qquad \le C \big(1+\E^{\kappa\norminfty{\bx}}+\E^{\kappa\norminfty{\bxb}}\big) \left(\norm{(\bx-\bxb)}_{L^2[t,\Tb]} \norm{\eta}_{L^2[t,\Tb]} \norm{\etab}_{L^2[t,\Tb]} + \norm{(\bx-\bxb)\eta\,\etab}_{L^1[t,\Tb]} \right),\\
     &\abs{\big\langle D^2_\bx u(t,\bx),(\eta,\etab)\big\rangle -\big\langle D^2_\bx u(\bar{t},\bx),(\eta,\etab)\big\rangle }\\
     &\qquad\le C  \big(1+\E^{\kappa\norminfty{\bx}}\big) \Big(\abs{t-\bar{t}}^H \norm{\eta}_{L^2[t,\Tb]}\norm{\etab}_{L^2[t,\Tb]} 
     +\norm{\etab}_{L^2[t,\Tb]} \norm{\eta}_{L^2[t,\bar{t}]} + \norm{\eta}_{L^2[t,\Tb]} \norm{\etab}_{L^2[t,\bar{t}]}\Big).
    \end{align*}
\end{lemma}

\begin{proof}
    $\bm{(D^2)}$
    Let $t\in[0,T]$, $\bx,\bxb\in  C^0 ([0,\Tb], \mathbb{R}^d)$ and $\eta,\etab\in  \Ww_t$. 
    We aim at proving a similar type of convergence as in the proof of Lemma~\ref{lemma:DerOne}.
    Let us start with the observation that
    \begin{align*}
        \langle D_\bx u(t,\bx+\etab) ,\eta\rangle - \langle D_\bx u(t,\bx) ,\eta\rangle
        &= \EE\left[\Big(\phi'(V^{t,\bx+\etab}_T) -\phi'(V^{t,\bx}_T) \Big)\int_t^{\Tb} \Big\langle\nabla f_s(\Wh_s^{t,\bx}), \eta_s \Big\rangle \ds\right]\\
        &\quad + \EE\left[\phi'(V^{t,\bx+\etab}_T) \int_t^{\Tb} \Big\langle\nabla  f_s(\Wh_s^{t,\bx+\etab}) -\nabla f_s(\Wh_s^{t,\bx}) , \eta_s \Big\rangle\ds\right].
    \end{align*}
    We then consider the difference of the first term with the first term of~\eqref{eq:Der_two_u}
    \begin{align*}
        &\EE\left[\left(\phi'(V^{t,\bx+\etab}_T) -\phi'(V^{t,\bx}_T)
        -\phi''(V^{t,\bx}_T) \int_t^{\Tb} \Big\langle\nabla f_s(\Wh_s^{t,\bx}), \etab_s \Big\rangle\ds\right)
        \int_t^{\Tb} \Big\langle\nabla f_s(\Wh_s^{t,\bx}), \eta_s \Big\rangle \ds\right]^2\\
        &\le \EE\left[\left(\phi'(V^{t,\bx+\etab}_T) -\phi'(V^{t,\bx}_T)
        -\phi''(V^{t,\bx}_T) \int_t^{\Tb} \Big\langle\nabla f_s(\Wh_s^{t,\bx}), \etab_s \Big\rangle\ds\right)^2\right] C\Big(1+\E^{\kappa\norminfty{\bx}}\Big) \norm{\eta}_\infty^2,
    \end{align*}
    where we used Cauchy-Schwarz inequality, estimate \eqref{eq:Vol_bound} and $C,\kappa>1$ are independent of $\ep,\bx,\eta$. The analysis performed in \eqref{eq:Decomposition_FD} and below applies similarly here and proves that this quantity is~$o(\norminfty{\etab}^2)$. Turning to the second term we have
    \begin{align*}
    &\EE\left[\phi'(V^{t,\bx+\etab}_T) \int_0^{\Tb} \Big\langle\nabla  f_s(\Wh_s^{t,\bx+\etab}) -\nabla f_s(\Wh_s^{t,\bx}) , \eta_s \Big\rangle\ds\right] - \EE\left[ \phi'(V^{t,\bx}_T) \int_t^{\Tb} \Big\langle\nabla^2 f_s(\Wh_s^{t,\bx}), (\eta_s ,\etab_s)\Big\rangle\ds 
    \right]\\
    &= \EE\left[ \Big(\phi'(V^{t,\bx+\etab}_T)-\phi'(V^{t,\bx}_T) \Big) \int_t^{\Tb} \Big\langle\nabla^2 f_s(\Wh_s^{t,\bx}), (\eta_s ,\etab_s)\Big\rangle\ds 
    \right]\\
    &\quad+ \EE\left[\phi'(V^{t,\bx+\etab}_T)  \int_t^{\Tb}\left(\Big\langle\nabla  f_s(\Wh_s^{t,\bx+\etab}) -\nabla f_s(\Wh_s^{t,\bx}) , \eta_s \Big\rangle - \Big\langle\nabla^2 f_s(\Wh_s^{t,\bx}), (\eta_s ,\etab_s)\Big\rangle\right)\ds\right]\\
    &=: (I) + (II).
    \end{align*}
By Cauchy-Schwarz and Jensen's inequalities we obtain
\begin{align*}
    \abs{(I)}^2 \le \EE\left[ \abs{\phi'(V^{t,\bx}_T) -\phi'(V^{t,\bx+\etab}_T)}^2\right] \Tb\int_t^{\Tb}\EE\left[ \abs{\Big\langle\nabla^2 f_s(\Wh_s^{t,\bx}), (\eta_s ,\etab_s)\Big\rangle}^2 
    \right]\ds,
\end{align*}
where the first expectation is bounded by~$C\norminfty{\etab}^2(1+\E^{\kappa\norminfty{\bx}}+\E^{\kappa\norminfty{\bx+\etab}})$,  for some~$C,\kappa>1$, in virtue of \eqref{eq:Payoff_limit}. Applying Cauchy-Schwarz inequality twice yields
\begin{align*}
    \EE\left[\abs{\Big\langle\nabla^2 f_s(\Wh_s^{t,\bx}), (\eta_s ,\etab_s)\Big\rangle}^2\right]
    \le \EE\left[ \abs{\nabla^2 f_s(\Wh_s^{t,\bx})}^2 \abs{\eta_s} \abs{\etab_s}\right] 
    \le \sum_{i,j=1}^d \EE\left[\abs{\nabla^2 f_s^{(i,j)}(\Wh_s^{t,\bx})}^2 \right]\norm{\eta}^2_\infty\norm{\etab}^2_\infty,
\end{align*}
which uniform bound over $s\in[0,\Tb]$ goes to zero as~$\norminfty{\etab}$ goes to zero, by \eqref{eq:Vol_bound}. Thus~$\abs{(I)}^2=o(\norminfty{\etab}^2)$. Before tackling $(II)$, we notice that
\begin{align*}
    \Big\langle\nabla  f_s(\Wh_s^{t,\bx+\etab}) -\nabla f_s(\Wh_s^{t,\bx}) , \eta_s \Big\rangle 
    &= \int_0^1 \Big\langle\nabla^2  f_s(\lambda \Wh_s^{t,\bx+\etab} + (1-\lambda)\Wh_s^{t,\bx}), \big(\eta_s, \Wh_s^{t,\bx+\etab}-\Wh_s^{t,\bx} \big)\Big\rangle \D\lambda\\
    &=  \int_0^1 \Big\langle\nabla^2  f_s(\Wh_s^{t,\bx+\lambda \etab}, \big(\eta_s, \etab_s \big)\Big\rangle \D\lambda.
\end{align*}
Applying Cauchy-Schwarz and Jensen's inequalities again yield
\begin{align*}
    \abs{(II)}^2 
    \le \EE\abs{\phi'(V^{t,\bx+\etab}_T)}^2  \Tb \int_0^{\Tb}\int_0^1  
    \EE \abs{ \Big\langle\nabla^2  f_s(\Wh_s^{t,\bx})-\nabla^2  f_s(\Wh_s^{t,\bx+\lambda \etab}), \big(\eta_s, \etab_s \big)\Big\rangle}^2\D\lambda\ds,
\end{align*}
where the first term is bounded thanks to \eqref{eq:Payoff_bound} and
\begin{align*}
    \frac{1}{\norm{\etab}_\infty^2}\EE \abs{ \Big\langle\nabla^2  f_s(\Wh_s^{t,\bx})-\nabla^2  f_s(\Wh_s^{t,\bx+\lambda \etab}), \big(\eta_s, \etab_s \big)\Big\rangle}^2
    &\le \frac{1}{\norm{\etab}_\infty^2} \EE \abs{\nabla^2  f_s(\Wh_s^{t,\bx})-\nabla^2  f_s(\Wh_s^{t,\bx+\lambda \etab})}^2 \abs{\eta_s}^2 \abs{\etab_s}^2\\
    &\le \sum_{i,j=1}^d \EE \abs{\nabla^2  f_s^{(i,j)}(\Wh_s^{t,\bx})-\nabla^2  f_s^{(i,j)}(\Wh_s^{t,\bx+\lambda \etab})}^2 \norm{\eta}_\infty^2 ,
\end{align*}
and this goes to zero, uniformly in~$s\in\Tb$, by \eqref{eq:Payoff_limit}. This proves the convergence of the second term and hence concludes the first part of the proof.

{\bf (i)} Let $\eta,\etab$ be supported on~$[t,t+\delta]$ for some~$0\le\delta\le \Tb-t$. Then Hölder's and Jensen's inequalities combined with Estimates~\eqref{eq:Vol_bound} and \eqref{eq:Payoff_bound} show
    \begin{align}\label{eq:Proof_DerTwo_Reg}
        &\abs{\EE\left[\phi''(V^{t,\bx}_T) \int_t^{\Tb} \Big\langle\nabla f_s(\Wh_s^{t,\bx}), \eta_s \Big\rangle\ds \,\int_t^{\Tb} \Big\langle\nabla f_s(\Wh_s^{t,\bx}), \etab_s \Big\rangle\ds\right]}^3\\
        &\le \EE\abs{\phi''(V^{t,\bx}_T)}^3 \, \EE\left[ \left( \int_t^{t+\delta} \Big\langle\nabla f_s(\Wh_s^{t,\bx}), \eta_s \Big\rangle\ds\right)^3\right] \,\EE\left[ \left(\int_t^{t+\delta} \Big\langle\nabla f_s(\Wh_s^{t,\bx}), \etab_s \Big\rangle\ds\right)^3\right] \nonumber\\
        &\le \EE\abs{\phi''(V^{t,\bx}_T)}^3 \,
        \delta^2 \int_t^{t+\delta}  \EE\abs{\nabla f_s(\Wh_s^{t,\bx})}^3 \abs{\eta_s}^3 \ds \,
        \delta^2 \int_t^{t+\delta}  \EE\abs{\nabla f_s(\Wh_s^{t,\bx})}^3 \abs{\etab_s}^3 \ds \nonumber\\
        &\le C\big(1+\E^{\kappa\norminfty{\bx}}\big) \delta^6 \norm{\eta}^3_\infty\norm{\etab}^3_\infty, \nonumber
    \end{align}
    for some $C,\kappa>1$ independent of $\bx,\eta,\etab$. Similarly,
    \begin{align*}
        \EE\left[ \phi'(V^{t,\bx}_T) \int_t^{\Tb} \Big\langle\nabla^2 f_s(\Wh_s^{t,\bx}), (\eta_s ,\etab_s)\Big\rangle\ds 
    \right]^2
    &\le \EE\abs{\phi'(V^{t,\bx}_T)}^2 \delta \int_t^{t+\delta} \EE\abs{\nabla^2 f_s(\Wh_s^{t,\bx})}^2 \abs{\eta_s}^2 \abs{\etab_s}^2 \ds\\
    &\le C\big(1+\E^{\kappa\norminfty{\bx}}\big)\delta^2 \norm{\eta}^2_\infty\norm{\etab}^2_\infty,
    \end{align*}
    for some $C,\kappa>1$ independent of $\bx,\eta,\etab$, where we used again \eqref{eq:Vol_bound} and \eqref{eq:Payoff_bound} to conclude. This proves that \eqref{eq:DerTwo_bound} is satisfied with $\alpha=1/2$.
    By similar computations as above but using Cauchy-Schwarz inequality we get that for any~$\eta,\etab\in\Ww_t$,
    \begin{align*}
     \abs{\big\langle D^2_\bx u(t,\bx),(\eta,\etab)\big\rangle}
     \le C \big(1+\E^{\kappa\norminfty{\bx}}\big) \norm{\eta}_{L^2[t,\Tb]}\norm{\etab}_{L^2[t,\Tb]}.
    \end{align*}

{\bf (ii)} We look at the regularity of~\eqref{eq:Der_two_u} with $\bx,\bxb\in C^0 ([0,\Tb], \mathbb{R}^d)$ and start with the first term. We split it in three and apply Hölder's inequality:
\begin{align}
    &\Bigg\lvert\EE\left[\phi''(V^{t,\bx}_T) \int_t^{\Tb} \Big\langle\nabla f_s(\Wh_s^{t,\bx}), \eta_s \Big\rangle\ds \,\int_t^{\Tb} \Big\langle\nabla f_s(\Wh_s^{t,\bx}), \etab_s \Big\rangle\ds\right] \nonumber\\
    &\quad- \EE\left[\phi''(V^{t,\bxb}_T) \int_t^{\Tb} \Big\langle\nabla f_s(\Wh_s^{t,\bxb}), \eta_s \Big\rangle\ds \,\int_t^{\Tb} \Big\langle\nabla f_s(\Wh_s^{t,\bxb}), \etab_s \Big\rangle\ds\right]\Bigg\lvert^3 \nonumber\\
    &\le \EE\abs{\phi''(V^{t,\bx}_T)-\phi''(V^{t,\bxb}_T)}^3
    \EE\abs{\int_t^{\Tb} \Big\langle\nabla f_s(\Wh_s^{t,\bx}), \eta_s \Big\rangle\ds}^3 \EE\abs{\int_t^{\Tb} \Big\langle\nabla f_s(\Wh_s^{t,\bx}), \etab_s \Big\rangle\ds}^3 \label{eq:Diff_D2_1} \\
    &\quad + \EE\abs{\phi''(V^{t,\bxb}_T)}^3 \EE\abs{ \int_t^{\Tb} \Big\langle\nabla f_s(\Wh_s^{t,\bx}), \etab_s \Big\rangle\ds}^3
    \EE\abs{\int_t^{\Tb} \Big\langle\nabla f_s(\Wh_s^{t,\bx}), \eta_s \Big\rangle\ds -\int_t^{\Tb} \Big\langle\nabla f_s(\Wh_s^{t,\bxb}), \eta_s \Big\rangle\ds }^3 \label{eq:Diff_D2_2}\\
    &\quad + \EE\abs{\phi''(V^{t,\bxb}_T)}^3 \EE\abs{ \int_t^{\Tb} \Big\langle\nabla f_s(\Wh_s^{t,\bxb}), \eta_s \Big\rangle\ds}^3
    \EE\abs{\int_t^{\Tb} \Big\langle\nabla f_s(\Wh_s^{t,\bx}), \etab_s \Big\rangle\ds -\int_t^{\Tb} \Big\langle\nabla f_s(\Wh_s^{t,\bxb}), \etab_s \Big\rangle\ds }^3 \label{eq:Diff_D2_3}\\
    &\le C\norminfty{\bx-\bxb}^3 \left(1+\E^{\kappa\norminfty{\bx}}+\E^{\kappa\norminfty{\bxb}}\right) \delta^6 \norm{\eta}^3_\infty\norm{\etab}^3_\infty, \nonumber
\end{align}
where we concluded using Jensen's inequality in the same way as in \eqref{eq:Proof_DerTwo_Reg} as well as all the estimates of Lemma \ref{lemma:Estimates_fg}. Similarly, for the second term we apply Cauchy-Schwarz inequality
\begin{align}
    &\abs{\EE\left[ \phi'(V^{t,\bx}_T) \int_t^{\Tb} \Big\langle\nabla^2 f_s(\Wh_s^{t,\bx}), (\eta_s ,\etab_s)\Big\rangle\ds 
    \right]
    - \EE\left[ \phi'(V^{t,\bxb}_T) \int_t^{\Tb} \Big\langle\nabla^2 f_s(\Wh_s^{t,\bxb}), (\eta_s ,\etab_s)\Big\rangle\ds 
    \right]}^2 \nonumber\\
    &\le \EE\abs{\phi'(V^{t,\bx}_T)-\phi'(V^{t,\bxb}_T)}^2 \EE\abs{\int_t^{\Tb} \Big\langle\nabla^2 f_s(\Wh_s^{t,\bx}), (\eta_s ,\etab_s)\Big\rangle\ds }^2 \label{eq:Diff_D2_4}\\
    &\quad + \EE\abs{\phi'(V^{t,\bxb}_T)}^2 \EE\abs{\int_t^{\Tb} \Big\langle\nabla^2 f_s(\Wh_s^{t,\bx}), (\eta_s ,\etab_s)\Big\rangle\ds -\int_t^{\Tb} \Big\langle\nabla^2 f_s(\Wh_s^{t,\bxb}), (\eta_s ,\etab_s)\Big\rangle\ds}^2 \label{eq:Diff_D2_5}\\
    &\le C\norminfty{\bx-\bxb}^2 \left(1+\E^{\kappa\norminfty{\bx}}+\E^{\kappa\norminfty{\bxb}} \right) \delta^2 \norm{\eta}^2_\infty\norm{\etab}^2_\infty,\nonumber
\end{align}
where we again used Lemma \ref{lemma:Estimates_fg} to conclude. This proves that $\partial^2_\bx u$ satisfies \eqref{eq:DerTwo_reg} with $\alpha=1/2$ and $\varrho={\rm id}$. 
By similar computations as above, exploiting the estimates~\eqref{eq:Vol_bound_infty}, \eqref{eq:Payoff_L2} and Cauchy-Schwarz inequality we get that for any~$\eta,\etab\in\Ww_t$, \eqref{eq:Diff_D2_1}, \eqref{eq:Diff_D2_2}, \eqref{eq:Diff_D2_3} and \eqref{eq:Diff_D2_4} are all bounded by
$$
C \norm{(\bx-\bxb)}_{L^2[t,\Tb]} \norm{\eta}_{L^2[t,\Tb]} \norm{\etab}_{L^2[t,\Tb]} \big(1+\E^{\kappa\norminfty{\bx}}+\E^{\kappa\norminfty{\bxb}} \big),
$$
while  \eqref{eq:Diff_D2_4} is bounded by
\begin{align*}
C \norm{(\bx-\bxb)\eta\etab}_{L^1[t,\Tb]} \big(1+\E^{\kappa\norminfty{\bx}}+\E^{\kappa\norminfty{\bxb}} \big).   
\end{align*}
This concludes the proof of this estimate.

{\bf (iii)} The time regularity unfolds as in {\bf (ii)} albeit using the estimates \eqref{eq:vol_time_reg} and \eqref{eq:Payoff_time_ref}. 
More precisely, for any~$\eta,\etab\in\Ww_t$ and~$t\le\bar{t}\le T$ it holds
    \begin{align*}
     \abs{\big\langle D^2_\bx u(t,\bx),(\eta,\etab)\big\rangle -\big\langle D^2_\bx u(\bar{t},\bx),(\eta,\etab)\big\rangle }
     &\le C  \big(1+\E^{\kappa\norminfty{\bx}}\big) \Big(\abs{t-\bar{t}}^H \norm{\eta_s}_{L^2[t,\Tb]}\norm{\etab_s}_{L^2[t,\Tb]} \\
     &\quad+ \norm{\etab}_{L^2[t,\Tb]} \norm{\eta}_{L^2[t,\bar{t}]} + \norm{\eta}_{L^2[t,\Tb]} \norm{\etab}_{L^2[t,\bar{t}]}\Big).
    \end{align*}
    This concludes the proof.
\end{proof}

\subsection{Proof of Proposition \ref{prop:SingularDirections}}\label{sec:SingularDirections}
We remind the reader that derivatives in the singular direction of~$K^t$ are defined in \eqref{eq:Def_SingularDerivatives} as the limits of the derivatives with the truncated kernel $K^{t,\delta}$. Let $\delta>0$ and~$(t,\bx)\in\Lambda$. 

$\bm{(\partial)}$ We name $v(t,\bx)$ the right-hand side of \eqref{eq:DerOne_Singular}. By linearity of the Fréchet derivative, Cauchy-Schwarz and Jensen's inequalities, we have
\begin{align*}
    \abs{\langle D_\bx u(t,\bx) ,K^{\delta,t}\rangle -v(t,\bx) }^2
    &=\abs{\EE\left[\phi'(V^{t,\bx}_T) \int_t^{\Tb}  \Big\langle \nabla f_s(\Wh_s^{t,\bx}), K^\delta(s,t) -K(s,t) \Big\rangle \ds\right]}^2 \\
    &\le \EE\abs{\phi'(V^{t,\bx}_T)}^2 \,\Tb \sup_{s\in[0,\Tb]} \EE \abs{\nabla f_s(\Wh_s^{t,\bx})}^2 \int_t^{\Tb} \abs{K^\delta(s,t) -K(s,t) }^2 \ds.
\end{align*}
Estimates \eqref{eq:Payoff_bound} and \eqref{eq:Vol_bound} ensure that the expectations are uniformly bounded. Moreover, we note that~$K^t$ and~$K^{t,\delta}$ are different only on $[t,t+\delta]$ hence we have
\begin{align}\label{eq:Diff_Kdelta}
    \int_0^{\Tb} \abs{K^\delta(s,t) -K(s,t) }^2\ds
    = \int_t^{t+\delta} \abs{K(t+\delta,t) - K(s,t)}^2\ds,
\end{align}
and, with $s\in[t,t+\delta]$, Assumption \ref{assu:kernel} yields
\begin{align}
    \abs{K(t+\delta,t) - K(s,t)}^2 
    &= \abs{(t+\delta-s) \int_0^1 \partial_s K\big(\lambda(t+\delta) + (1-\lambda) s,t\big) \D\lambda}^2 \label{eq:Kdelta-K}\\
    &\le 4\delta^2 \left(\int_0^1 C\big( \lambda(t+\delta-s) +(s-t)  \big)^{H-3/2} \D\lambda \right)^2 \nonumber\\
    &= \left(\frac{2C\delta}{H-\half} \right)^2\Big[\delta^{H-1/2} - (s-t)^{H-1/2}\Big]^2 \nonumber \\
    &\le \left(\frac{C\delta}{H-\half} \right)^2 (s-t)^{2H-1},\nonumber
\end{align}
where the constant~$C$ depends on~$H$ and may change from line to line. 
Therefore, coming back to~\eqref{eq:Diff_Kdelta}, we obtain
\begin{align*}
    \int_0^{\Tb} \abs{K^\delta(s,t) -K(s,t) }^2\ds
    \le  \left(\frac{C\delta}{H-\half} \right)^2 \frac{\delta^{2H}}{2H},
\end{align*}
which tends to zero as $\delta$ goes to zero.

$\bm{(\partial^2)}$ We call $w(t,\bx)$ the right-hand side of \eqref{eq:DerTwo_Singular}. Once more by linearity of the Fréchet derivative, we have 
\begin{align*}
    &\big\langle D_\bx^2 u(t,\bx) ,(K^{\delta,t},K^{\delta,t})\big\rangle-w(t,\bx) \\
    &=\EE\left[\phi''(V^{t,\bx}_T) \int_t^{\Tb} \Big\langle\nabla f_s(\Wh_s^{t,\bx}), K^\delta(s,t)-K(s,t) \Big\rangle\ds \,\int_t^{\Tb} \Big\langle\nabla f_s(\Wh_s^{t,\bx}), K^\delta(s,t) \Big\rangle\ds\right]\\
    &\quad + \EE\left[\phi''(V^{t,\bx}_T) \int_t^{\Tb} \Big\langle\nabla f_s(\Wh_s^{t,\bx}), K(s,t) \Big\rangle\ds \,\int_t^{\Tb} \Big\langle\nabla f_s(\Wh_s^{t,\bx}), K^\delta(s,t)-K(s,t) \Big\rangle\ds\right]\\
    &\quad+\EE\left[ \phi'(V^{t,\bx}_T) \int_t^{\Tb} \Big\langle\nabla^2 f_s(\Wh_s^{t,\bx}), \big(K^\delta(s,t) ,K^\delta(s,t)-K(s,t)\big)\Big\rangle\ds 
    \right]\\
    &\quad+\EE\left[ \phi'(V^{t,\bx}_T) \int_t^{\Tb} \Big\langle\nabla^2 f_s(\Wh_s^{t,\bx}), \big(K^\delta(s,t)-K(s,t),K(s,t)\big)\Big\rangle\ds 
    \right]\\
    &=: (I) +(II)+(III)+(IV).
\end{align*}
Let $p=\frac{2}{1-H}>2$ and note that $H+2/p=1$, which allows us to use Hölder's inequality as
\begin{align*}
    \abs{(I)} \le \left(\EE\abs{\phi''(V^{t,\bx}_T)}^{1/H}\right)^H
    \left( \EE\abs{\int_t^{\Tb} \Big\langle\nabla f_s(\Wh_s^{t,\bx}), K^\delta(s,t)-K(s,t) \Big\rangle\ds}^p \,\EE\abs{\int_t^{\Tb} \Big\langle\nabla f_s(\Wh_s^{t,\bx}), K^\delta(s,t) \Big\rangle\ds}^p \right)^{1/p}
\end{align*}
Using Jensen's inequality, estimates \eqref{eq:Payoff_bound} and \eqref{eq:Vol_bound} yield
\begin{align*}
    \abs{(I)} \le C\big(1+\E^{\kappa \norm{\bx}_\infty}\big) \Tb^{2p-2} \int_t^{\Tb} \abs{K^\delta(s,t)-K(s,t)}^p \ds \int_t^{\Tb} \abs{K^{\delta}(s,t)}^p\ds.
\end{align*}
On the one hand, Assumption \ref{assu:kernel} entails
$$
\int_t^{\Tb} \abs{K^{\delta}(s,t)}^p\ds \le \int_t^{\Tb} C(s\vee(t+\delta)-t)^{p(H-\half)}\ds
\le C \int_t^{\Tb} (s-t)^{p(H-\half)}\ds = C\, \frac{1-H}{H} \Tb^\frac{H}{1-H},
$$
where we used $p(H-1/2)+1=\frac{H}{1-H}$.
On the other hand, reasoning as in \eqref{eq:Kdelta-K} we get
\begin{align*}
    \int_t^{\Tb} \abs{K^\delta(s,t)-K(s,t)}^p \ds 
    \le \left(\frac{C\delta}{H-\half}\right)^p \int_t^{t+\delta} (s-t)^{p(H-1/2)} \ds
    = \left(\frac{C\delta}{H-\half}\right)^2  \frac{1-H}{H} \delta^{\frac{H}{1-H}},
\end{align*}
which tends to zero with $\delta$. The same computations show that $(II)$ also goes to zero because $\int_t^T \abs{K(s,t)}^p \ds$ is also bounded.

For the third term, we apply Hölder's inequality followed by Jensen's and Cauchy-Schwarz:
\begin{align*}
    \abs{(III)}
    &\le \left(\EE\abs{ \phi'(V^{t,\bx}_T)}^{1/H}\right)^H
    \left(\EE\abs{\int_t^{\Tb} \Big\langle\nabla^2 f_s(\Wh_s^{t,\bx}), \big(K^\delta(s,t) ,K^\delta(s,t)-K(s,t)\big)\Big\rangle\ds}^{p/2}
    \right)^{2/p}\\
    &\le \left(\EE\abs{ \phi'(V^{t,\bx}_T)}^{1/H}\right)^H \Tb^{p/2-1}  \left(\sup_{s\in[0,\Tb]}\EE\abs{\nabla^2 f_s(\Wh_s^{t,\bx})}^{p/2} \int_t^{\Tb} \abs{K^\delta(s,t)}^{p/2} \abs{K^\delta(s,t)-K(s,t)}^{p/2}\ds\right)^{2/p}.
\end{align*}
The expectations are again bounded thanks to \eqref{eq:Payoff_bound} and \eqref{eq:Vol_bound}. For the integral we will use that $(H-1/2)p/2+1=\frac{1}{2-2H}$ and $(2H-1)p/2+1 = \frac{H}{1-H}$. Once again we leverage on Assumption \ref{assu:kernel} and~\eqref{eq:Diff_Kdelta} to obtain
\begin{align*}
    \int_t^{\Tb} \abs{K^\delta(s,t)}^{p/2} \abs{K^\delta(s,t)-K(s,t)}^{p/2}\ds 
    &\le C\delta^{p/2} \int_t^{t+\delta}  (s-t)^{(H-1/2)p/2} \ds \\
    &=C\delta^{p/2}  \frac{1-H}{H} \delta^\frac{H}{1-H} = \frac{C(1-H)}{H} \delta^{\frac{1+H}{1-H}},
\end{align*}
where the constant $C>0$ may change from line to line. This proves that $(III)$ tends to zero as $\delta\to0$ and, since $(IV)$ converges with the same arguments, this concludes the proof. \qed


\bibliographystyle{siam}
\bibliography{bib}
\end{document}